\DeclareMathSymbol{\leq}{\mathrel}{symbols}{20}
\DeclareMathSymbol{\geq}{\mathrel}{symbols}{21}
\newtheoremstyle{WreschTheoremstyle} 
                        {1.5em}    
                        {2.5em}    
                        {}         
                        {}         
                        {\bfseries}
                        {}        
                        {\newline} 
                        {\raisebox{0.6em}{\thmname{#1}\thmnumber{#2}\thmnote{ (#3)}}}
\newcommand{\R}{\mathbb{R}}
\newcommand{\N}{\mathbb{N}}
\newcommand{\F}{\mathcal{F}}
\renewcommand{\P}{\mathbb{P}}
\newcommand{\E}{\mathbb{E}}
\newcommand{\e}{\varepsilon}
\renewcommand{\1}{\mathbbm{1}}
\newtheorem{Theorem}{Theorem}[]
\newtheorem{Proposition}[Theorem]{Proposition}
\newtheorem{Corollary}[Theorem]{Corollary}
\newtheorem{Lemma}[Theorem]{Lemma}
\newtheorem{Remark}[Theorem]{Remark}
\newtheorem{Example}[Theorem]{Example}
\numberwithin{equation}{section}
\newcommand{\customlabel}[1]{%
     \stepcounter{ref}%
   \protected@write
\@auxout{}{\string\newlabel{#1}{{\thesatz.\arabic{ref}}{\thepage}{\thesatz.\arabic{ref}}{#1}{}}}%
   \hypertarget{#1}{\thesatz.\arabic{ref}}%
}
\newenvironment{sciabstract}{\begin{quote}}{\end{quote}}
\newcounter{lastnote}
\title{Existence of densities for multi-type CBI processes}
\newcommand{\pdftitle}{Existence of densities for multi-type CBI processes}
\newcommand{\pdfauthor}{Martin Friesen}
\author{
 Martin Friesen\footnote{Fakult\"at f\"ur Mathematik und Naturwissenschaften, Bergische Universit\"at Wuppertal, Gaußstraße 20, 42119 Wuppertal, Germany, friesen@math.uni-wuppertal.de}\\
 Peng Jin\footnote{Fakult\"at f\"ur Mathematik und Naturwissenschaften, Bergische Universit\"at Wuppertal, Gaußstraße 20, 42119 Wuppertal, Germany, jin@uni-wuppertal.de}\\
 Barbara R\"udiger\footnote{Fakult\"at f\"ur Mathematik und Naturwissenschaften, Bergische Universit\"at Wuppertal, Gaußstraße 20, 42119 Wuppertal, Germany, ruediger@uni-wuppertal.de}
}
\def\HyPsd@CatcodeWarning#1{}
\begin{document}




\maketitle

\begin{sciabstract}\textbf{Abstract:}
Let $X$ be a multi-type continuous-state branching process with immigration (CBI process) on state space $\R_+^d$.
Denote by $g_t$, $t \geq 0$, the law of $X(t)$.
We provide sufficient conditions under which $g_t$ has, for each $t > 0$, a density with respect to the Lebesgue measure.
Such density has, by construction, some anisotropic Besov regularity.
Our approach neither relies on the use of Malliavin calculus nor on the study of corresponding Laplace transform.
\end{sciabstract}

\noindent \textbf{AMS Subject Classification:} 60E07; 60G30; 60J80 \\
\textbf{Keywords:} multi-type CBI processes; affine processes; density; anisotropic Besov space

\section{Introduction}
Multi-type CBI processes are Markov processes with state space
\[
 \R_+^d = \{ x \in \R^d \ | \ x_1,\dots, x_d \geq 0\}, \ \ d \in \N,
\]
which arise as scaling limits of Galton-Watson branching processes with immigration, see, e.g., \cite{L06, L11}.
A remarkable feature of multi-type CBI processes is that the logarithm of their Laplace transform is an affine function of the initial state variable,
i.e., multi-type CBI processes are affine processes in the sense of \cite[Definition 2.6]{DFS03}.
They are also semimartingales whose characteristics
can be readily deduced from their branching and immigration mechanisms.
Although these processes are primarily motivated by population models,
they have also found many applications in finance, especially in term-structure interest rate
models and stochastic volatility models, see, e.g., \cite{DFS03}.

Let us describe these processes in more detail.
According to \cite[Theorem 2.7]{DFS03} (see also \cite[Remark 2.5]{BLP15}),
there exists a unique conservative Feller semigroup $(P_{t})_{t \geq 0}$
acting on the Banach space of continuous functions vanishing at infinity
with state space $\R_{+}^{d}$ such that its infinitesimal generator
has core $C_{c}^{2}(\R_{+}^{d})$ and is, for $f\in C_{c}^{2}(\R_{+}^{d})$,
given by
\begin{align}\label{GENERATOR}
 (Lf)(x) &= \sum \limits_{i=1}^{d}c_i x_i \frac{\partial^2 f(x)}{\partial x_i^2} + (\beta + Bx)\cdot (\nabla f)(x)
 + \int \limits_{\R_+^d}( f(x+z) - f(x) ) \nu(dz)
 \\ \notag &\ \ \ + \sum \limits_{i=1}^{d}x_i \int \limits_{\R_+^d} \left( f(x+z) - f(x) - \frac{\partial f(x)}{\partial x_i}(1 \wedge z_i) \right)\mu_i(dz),
\end{align}
provided that the tuple $(c,\beta,B,\nu,\mu)$ satisfies
\begin{enumerate}
 \item[(i)] $c = (c_1,\dots, c_d) \in \R_+^d$.
 \item[(ii)] $\beta = (\beta_1,\dots, \beta_d) \in \R_+^d$.
 \item[(iii)] $B = (b_{ij})_{i,j \in \{1,\dots, d\}}$ is such that $b_{ij} \geq 0$ whenever $i,j \in \{1,\dots, d\}$ satisfy $i \neq j$.
 \item[(iv)] $\nu$ is a Borel measure on $\R_+^d$ satisfying $\int_{\R_+^d}( 1 \wedge |z|) \nu(dz) < \infty$ and $\nu(\{0\}) = 0$.
 \item[(vi)] $\mu = (\mu_1,\dots, \mu_d)$, where, for each $i \in \{1,\dots, d\}$, $\mu_i$ is a Borel measure on $\R_+^d$ satisfying
\begin{align}\label{EQ:00}
 \int \limits_{\R_+^d} \left( |z| \wedge |z|^2 + \sum \limits_{j \in \{1,\dots, d\} \backslash \{i\}} (1 \wedge z_j) \right) \mu_i(dz) < \infty,
\ \ \mu_i(\{0\}) = 0.
\end{align}
\end{enumerate}
The corresponding Markov process with generator $L$ is called a (conservative) multi-type CBI process. We call a
tuple $(c,\beta,B,\nu,\mu)$ with properties (i) – (vi) admissible.
Note that this notion of admissible parameters is
a special case of \cite[Definition 2.6]{DFS03}, see also \citep[Remark 2.3]{BLP15}
for additional comments. One of the advantages of multi-type CBI processes
is their analytical tractability via Laplace transforms. More precisely,
the Laplace transform of the transition semigroup $P_{t}$ can be
computed explicitly in terms of solutions to generalized Riccati equations.
Most of the results obtained for multi-type CBI processes are based
on a detailed study of these equations.

In this work we study existence of (transition) densities for multi-type
CBI processes. A general expository on one-dimensional CBI processes was recently given in \cite{CLP18}, while a particular
example of a two-dimensional affine process was studied
in \cite{JKR17}. Both approaches essentially rely on the study of
the corresponding Riccati equations, i.e. on the Laplace transform
of the transition semigroup. Results applicable for a wide class of
affine processes on the state space $\R^{n}\times\R_{+}^{d}$ were
obtained in \cite{FMS13}. Applying their main result to the particular
case of multi-type CBI processes requires that $c_{1},\dots,c_{d}>0$,
i.e. the diffusion component is non-degenerate. Results applicable
also to cases without diffusion (i.e. $c_{1}=\dots=c_{d}=0$) are,
to the best of our knowledge, not available in arbitrary dimension.
Such results should, of course, rely on the smoothing property of
jumps corresponding to the branching and immigration mechanisms. We
would like to mention that, similar to the diffusion case, there
also exists a Malliavin calculus for stochastic equations with jumps
\cite{BC86,P96,P97}. It is, however, much less powerful then its
counterpart for diffusions.

We use some ideas developed in \cite{FP10, DF13, R17}, which provide a simple
technique to prove existence of a density having some Besov-regularity without the use of Malliavin calculus.
Their techniques were applied to L\'evy driven stochastic equations with H\"older continuous coefficients \cite{DF13},
3D Navier-Stokes equations driven by Gaussian noise \cite{DR14}, but also to the space-homogeneous Boltzmann equation \cite{F15}.

\section{Statement of the results}

\subsection{The anisotropic Besov space}
Due to (\ref{GENERATOR}) and the abundant choice
of admissible parameters, it is reasonable to expect that the different
components of a multi-type CBI processes on $\R_{+}^{d}$ behave very
differently. Below we introduce anisotropic Besov spaces, which enable
us to measure regularity for the density of each component of a CBI
process separately. Similar ideas have been also applied in \cite{FJR18} to stochastic equations driven by L\'evy processes with anisotropic jumps.
We call $a = (a_1,\dots, a_d)$ an anisotropy if it satisfies
\begin{align}\label{EQ:35}
 0 < a_1, \dots, a_d < \infty \qquad \text{ and } \qquad a_1 + \dots + a_d = d.
\end{align}
For $\lambda > 0$ with $\lambda / a_k \in (0,1)$, $k = 1,\dots, d$, the anisotropic Besov space $B_{1,\infty}^{\lambda, a}(\R^d)$ is
defined as the Banach space of functions $f: \R^d \longrightarrow \R$ with finite norm
\begin{align}\label{EQ:36}
 \| f\|_{B_{1,\infty}^{\lambda,a}} := \| f\|_{L^1(\R^d)} +
 \sum \limits_{k=1}^{d}\sup \limits_{h\in  [-1,1]} |h|^{- \lambda / a_k} \| \Delta_{h e_k}f \|_{L^1(\R^d)},
\end{align}
where $\Delta_h f(x) = f(x+h) - f(x)$, $h \in \R^d$, see \cite{D03} and \cite{T06} for additional details and references.
Here $e_1,\dots, e_d$ denote the canonical basis vectors in $\R^d$.
In the above definition, $\lambda / a_k$ describes the smoothness in the coordinate $k$,
its restriction to $(0,1)$ is not essential.
Without this restriction we should use iterated differences in \eqref{EQ:36} instead (see \cite[Theorem 5.8.(ii)]{T06}).

\subsection{Smoothing property of the noise}
Let $(c,\beta,B,\nu,\mu)$ be admissible parameters.
For given $x\in\R^{d}$, let $L^{x}=(L^{x}(t))_{t \geq 0}$ be a Lévy
process on $\R^{d}$ whose characteristic function $\E[e^{i\lambda\cdot L^{x}(t)}]=e^{-t\Psi_{x}(\lambda)}$,
$\lambda\in\R^{d}$, satisfies
\begin{align}
\Psi_{x}(\lambda) & =\sum\limits _{j=1}^{d}2c_{j}x_{j}\1_{\R_{+}}(x_{j})\lambda_{j}^{2}+\int\limits _{\R_{+}^{d}}\left(1-e^{i\lambda\cdot z}\right)\nu(dz)\label{EQ:01}\\
 & \ \ \ +\sum\limits _{j=1}^{d}\1_{\R_{+}}(x_{j})x_{j}\int\limits _{|z|\leq1}\left(1+i\lambda\cdot z-e^{i\lambda\cdot z}\right)\mu_{j}(dz).\nonumber
\end{align}
Denote by $g_{t}^{x}(dz)$ the distribution of $L^{x}(t)$.
If this distribution has a density with respect to the Lebesgue measure,
then, by abuse of notation, we denote this density also by $g_{t}^{x}(z)$.
Let $(\alpha_{i})_{i\in\{1,\dots,d\}}\subset(0,2]$.
For $I\subset\{1,\dots,d\}$, define
\begin{align}
\rho_{I}(x):=\min\{x_{j}^{1/\alpha_{j}}\ |\ j\in I\}\1_{\R_{+}^{d}}(x),\qquad\rho_{\emptyset}=\1_{\R_{+}^{d}}(x),\qquad\Gamma(I)=\{x\in\R^{d}\ |\ \rho_{I}(x)>0\}.\label{EQ:05}
\end{align}
The following is our main condition on the smoothing property of the
noise.
\begin{enumerate}
\item[(A)] There exists $I\subset\{1,\dots,d\}$ and constants $(\alpha_{i})_{i\in\{1,\dots,d\}}\subset(0,2]$,
$C,t_{0}>0$ such that, for each $x\in\Gamma(I)$ and $t\in(0,t_{0})$,
the distribution $g_{t}^{x}$ has a density with respect to the Lebesgue
measure satisfying, for any $i\in\{1,\dots,d\}$ and $t\in(0,t_{0})$,
\begin{align}
\int\limits _{\R^{d}}|g_{t}^{x}(z+he_{i})-g_{t}^{x}(z)|dz\leq\frac{C|h|}{\rho_{I}(x)}t^{-1/\alpha_{i}},\ \ h\in[-1,1].\label{MAIN:ASSUMPTION}
\end{align}
\end{enumerate}
Here $\alpha_{i}$ describes the smoothness of the noise. These constants are related
with an anisotropy $a=(a_{i})_{i\in\{1,\dots,d\}}$ and a mean order
of smoothness $\overline{\alpha}$ by
\begin{align}
\frac{1}{\overline{\alpha}}=\frac{1}{d}\left(\frac{1}{\alpha_{1}}+\cdots+\frac{1}{\alpha_{d}}\right),\qquad a_{i}=\frac{\overline{\alpha}}{\alpha_{i}},\ \ i\in\{1,\dots,d\}.\label{APPROX:03}
\end{align}
Hence larger values for $\alpha_{i}$ give higher smoothness, that is, larger values for $\overline{\alpha}$. The factor
$\rho_{I}$ is essential to treat the boundary behavior of multi-type
CBI processes. By convention $1/0:=+\infty$, we see that \eqref{MAIN:ASSUMPTION}
is clearly satisfied, if $\rho_{I}(x)=0$, i.e. $x \not \in \Gamma(I)$.
In Section 6 we provide some sufficient conditions for (A).
Based on these conditions, below we provide our main guiding examples.
\begin{Example}\label{EXAMPLE:00}
 \item[(a)] Define $I_{1}=\left\{ j\in\{1,\dots,d\}\ |\ c_{j}>0\right\} $ and
 let $I_{2}:=\{1,\dots,d\}\backslash I_{1}$. Suppose that, for each
 $j\in I_{2}$, the L\'evy measure $\mu_j$ satisfies
 \[
  \mu_j(dz) = \1_{\R_+}(z_j)\frac{dz_j}{z_j^{1+ \alpha_j}} \otimes \prod \limits_{k \neq j} \delta_0(dz_k), \ \ \alpha_j \in (1,2).
 \]
 Then (A) is satisfied for $I=\{1,\dots,d\}$ and $\alpha_{j}=2\1_{I_{1}}(j)+\alpha_{j}\1_{I_{2}}(j)$,
 see Lemma \ref{LEMMA:SMOOTHING}.
 \item[(b)] It is worthwhile to mention that the particular choice
\[
 \mu_j(dz) = \1_{\R_+^d}(z)\frac{dz}{ |z|^{d + \alpha} }, \ \ \alpha \in (0,2)
\]
 violates \eqref{EQ:00} for any choice of $\alpha \in (0,2)$.
 However, suppose that there exists $j \in \{1,\dots, d\}$ and a L\'evy measure $\mu_j'$ on $\R_+^d$ such that
 \[
  \mu_j(dz) = \1_{\R_+^d}(z) \1_{ \{ |z| \leq 1\}}  \frac{dz}{|z|^{d+\alpha}} + \mu_j'(dz), \ \ \alpha \in (0,1),
 \]
 then condition (A) is satisfied for $I = \{j \}$ and $\alpha_1 = \dots = \alpha_d = \alpha$, see Proposition \ref{PROP:03} and
 Lemma \ref{LEMMA:SMOOTHING}.
 Nevertheless this example does not satisfy the other restrictions formulated in our main results below.
\item[(c)] Suppose that there exists a subordinator $\nu'$ on $\R_+^d$ such that
 \[
  \nu(dz) = \1_{\R_+^d}(z) \1_{ \{ |z| \leq 1\}} \frac{dz}{|z|^{d+\alpha}} + \nu'(dz), \ \ \alpha \in (0,1),
 \]
 then condition (A) is satisfied for $I = \emptyset$ and $\alpha_1 = \dots = \alpha_d = \alpha$, see Lemma \ref{LEMMA:SMOOTHING1}.
\end{Example}
It is worthwhile to mention that we may also consider more general classes branching and immigration measures which include, in particular, cases where $\mu_j$ and $\nu$ are not absolutely continuous with respect to the Lebesgue measure, see \cite{FJR18} for additional details.

\subsection{Existence of densities for multi-type CBI processes}

We start with the most general case and then continue with more specific situations. 
\begin{Theorem}\label{MAINTHEOREM} 
Let $X$ be a multi-type CBI process with admissible parameters $(c,\beta,B,\nu,\mu)$
and suppose that
\begin{enumerate}
\item[(a)] Condition (A) holds for $I=\{1,\dots,d\}$ and some $\alpha_{1},\dots,\alpha_{d}>\frac{4}{3}$.
\item[(b)] There exists $\tau\in(0,1)$ such that
\[
\sum\limits _{j=1}^{d}\int\limits _{|z|>1}|z|^{1+\tau}\mu_{j}(dz)+\int\limits _{|z|>1}|z|^{1+\tau}\nu(dz)<\infty.
\]
\end{enumerate}
If $X(0)$ satisfies $\E[|X(0)|^{1 + \tau}] < \infty$, then for each $t>0$, $X(t)$ has a density $g_{t}$ on
\[
\Gamma(\{1,\dots,d\})=\{x\in\R_{+}^{d}\ |\ x_{1},\dots,x_{d}>0\}.
\]
Moreover, $f_{t}(x):=\rho(x)g_{t}(x)\in B_{1,\infty}^{\lambda,a}(\R^{d})$,
where $\lambda>0$ is small enough, $a$ is defined by \eqref{APPROX:03}
and $\rho(x)=\min\{x_{1}^{1/\alpha_{1}},\dots,x_{d}^{1/\alpha_{d}}\}\1_{\R_{+}^{d}}(x)$.
\end{Theorem}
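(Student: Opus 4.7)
The plan is to prove, for each $k\in\{1,\dots,d\}$ and each $h\in[-1,1]$, the estimate
\[
\|\Delta_{h e_k} f_t\|_{L^1(\R^d)} = \sup_{\phi\in L^\infty,\,\|\phi\|_\infty \le 1} \bigl|\mathbb E[\rho(X(t))\Delta_{h e_k}\phi(X(t))]\bigr| \le C|h|^{\lambda/a_k}
\]
for a suitably small $\lambda>0$; by the definition of $B_{1,\infty}^{\lambda,a}$ this closes the proof, noting that $f_t\in L^1(\R^d)$ since $\mathbb E[\rho(X(t))]<\infty$ by (b), the moment assumption on $X(0)$, and the standard CBI moment formulae (observe $\rho(x)\le |x|^{1/\max_i\alpha_i}\le 1+|x|$).

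The idea is a short-time conditional freezing argument in the spirit of \cite{DF13,R17}. Starting from the SDE representation of the CBI process, for a parameter $\delta\in(0,t\wedge t_0)$ to be chosen as a power of $|h|$, construct on the same probability space a frozen approximation
\[
\tilde X(t) := X(t-\delta)+\delta\bigl(\beta+B X(t-\delta)\bigr)+L^{X(t-\delta)}(\delta),
\]
where $L^{X(t-\delta)}(\delta)$ uses the Brownian motion and Poisson random measures driving $X$ on $(t-\delta,t]$ with the state-dependent rates frozen at $X(t-\delta)$. Conditionally on $\mathcal F_{t-\delta}$, the increment $\tilde X(t)-X(t-\delta)-\delta(\beta+BX(t-\delta))$ has law $g_\delta^{X(t-\delta)}$, so condition (A) applies directly. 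Split $\mathbb E[\rho(X(t))\Delta_{h e_k}\phi(X(t))]=T_1+T_2+T_3$, where $T_1=\mathbb E[\rho(X(t-\delta))\Delta_{h e_k}\phi(\tilde X(t))]$ is the main smoothing term, $T_2=\mathbb E[(\rho(X(t))-\rho(X(t-\delta)))\Delta_{h e_k}\phi(X(t))]$ is the weight error, and $T_3=\mathbb E[\rho(X(t-\delta))(\Delta_{h e_k}\phi(X(t))-\Delta_{h e_k}\phi(\tilde X(t)))]$ is the coupling error. Conditioning $T_1$ on $\mathcal F_{t-\delta}$ and applying (A), then using that $I=\{1,\dots,d\}$ makes $\rho=\rho_I$ (with the convention $0/0:=0$ handling the boundary of $\R_+^d$), yields $|T_1|\le C|h|\delta^{-1/\alpha_k}$. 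For $T_2$, combine H\"older continuity of $\rho$ with exponent $1/\max_i\alpha_i\in(0,1)$ (valid since $\alpha_i>1$) and the CBI moment bound $\mathbb E[|X(t)-X(t-\delta)|]\le C\delta$ (consequence of (b)) to obtain $|T_2|\le C\delta^{1/\max_i\alpha_i}$.

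The main obstacle is the coupling error $T_3$, because $\phi$ is only bounded and the pathwise estimate $\mathbb E[|X(t)-\tilde X(t)|]\le C\delta$ cannot be transferred to $T_3$ directly. We use the mollification device of \cite{DF13,R17}: split $\phi = \phi*\psi_\eta + (\phi-\phi*\psi_\eta)$ with a symmetric mollifier $\psi_\eta$ at scale $\eta>0$. The Lipschitz part $\phi*\psi_\eta$ has gradient of order $\eta^{-1}$, so its contribution to $T_3$ is bounded by $C\eta^{-1}\mathbb E[\rho(X(t-\delta))|X(t)-\tilde X(t)|]\le C\eta^{-1}\delta$. The rough part $\phi-\phi*\psi_\eta$ is evaluated against the conditional density of $\tilde X(t)$; using (A) coordinatewise and Minkowski yields
\[
\|g_\delta^x - g_\delta^x * \psi_\eta\|_{L^1(\R^d)}\le \frac{C\eta}{\rho_I(x)}\sum_{i=1}^d \delta^{-1/\alpha_i},
\]
with the $\rho_I^{-1}$ again cancelled by the weight, giving a contribution of order $\eta\delta^{-1/\min_i\alpha_i}$. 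The analogous contribution against $X(t)$ is reduced to the same form by an additional freezing at a finer scale plus a control already covered by $T_1$ and $T_2$. Optimizing in $\eta$ gives $|T_3|\le C\delta^{(1-1/\min_i\alpha_i)/2}$, which is positive when $\min_i\alpha_i>1$; the stronger restriction $\alpha_i>4/3$ in (a) provides the extra margin needed for the subsequent anisotropic balance to yield a strictly positive global exponent $\lambda$.

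Finally, choosing $\delta=|h|^{\theta_k}$ with $\theta_k>0$ tailored to coordinate $k$ and using the anisotropy \eqref{APPROX:03}, the three bounds combine to $|T_1|+|T_2|+|T_3|\le C|h|^{\lambda/a_k}$ for a common, sufficiently small $\lambda>0$. This gives $f_t = \rho g_t \in B_{1,\infty}^{\lambda,a}(\R^d)$; existence of the density $g_t$ for $X(t)$ on $\Gamma(\{1,\dots,d\})$ is then immediate, since $\rho>0$ there.
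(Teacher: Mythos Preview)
Your strategy—freeze on $[t-\delta,t]$ and split into a smoothing term $T_1$, a weight error $T_2$, and a coupling error $T_3$—is precisely the paper's (Proposition \ref{PROP:01}), and your mollification of $\phi$ is what underlies Lemma \ref{LEMMA:02}. The problem is in the exponents. For the CBI process the bound $\E[|X(t)-X(t-\delta)|]\le C\delta$ is false: the square--root diffusion and the compensated branching jumps force $\E[|X_i(t)-X_i(t-\delta)|]\le C\delta^{1/2}$ (Lemma \ref{LEMMA:01} with $\gamma_i=2$). Consequently the coupling error satisfies only $\E[|X(t)-\tilde X(t)|]\le C\delta^{3/4}$, not $C\delta$: the dominant term $\int_{t-\delta}^t(\sigma(X(s))-\sigma(X(t-\delta)))\,dW(s)$ has $L^2$-norm at most $(\int_{t-\delta}^t\E|X(s)-X(t-\delta)|\,ds)^{1/2}\le C\delta^{3/4}$, using $|\sqrt a-\sqrt b|^{2}\le|a-b|$, and the small--jump term behaves identically. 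This $3/4$ is the paper's $\kappa_i$ (Proposition \ref{PROP:00}), and $\alpha_i>4/3=1/\kappa_i$ is \emph{exactly} the balance condition \eqref{EQ:21} needed so that $|h|\delta^{-1/\alpha_k}$ against $\delta^{3/4}$ yields a positive power of $|h|$; it is not ``extra margin''. With your claimed rate $\delta$ the conclusion would already follow for all $\alpha_i>1$, which the method does not deliver.

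There is a second gap in your treatment of $T_3$. The ``rough'' contribution against $X(t)$, namely $\E[\rho(X(t-\delta))\,\Delta_{he_k}(\phi-\phi\ast\psi_\eta)(X(t))]$, cannot be closed by ``an additional freezing at a finer scale'': $\phi-\phi\ast\psi_\eta$ is still merely bounded, so the same uncontrolled term recurs, and you have no a~priori density for $X(t)$ to integrate against. The paper avoids this circularity by testing from the outset against $\phi\in C_b^{\eta,a}(\R^d)$ with a \emph{small} H\"older exponent $\eta$ (Lemma \ref{LEMMA:02}). The coupling term then becomes $\le C\|\phi\|_{C_b^{\eta,a}}\max_j\E\bigl[\rho(X(t-\delta))|X_j(t)-\tilde X_j(t)|^{\eta/a_j}\bigr]$, and H\"older with conjugate exponents $(1+\tau,\,1+1/\tau)$ both separates the unbounded weight $\rho$ and keeps the required moment order $(1+1/\tau)\eta/a_j\le 1\wedge\gamma_{*,j}$ within reach (cf.\ \eqref{EXAMPLE:25}); your full Lipschitz exponent $1$ would instead demand moments of order $>1$ on $X(t)-\tilde X(t)$, which are not available here.
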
 
This statement is, e.g., applicable in the situation of Example \ref{EXAMPLE:00}.(a),
where the smoothing property (A) is obtained from a combination of diffusion and jumps from the branching mechanism. 
In absence of diffusion, we can weaken the restriction on $\alpha_{1},\dots,\alpha_{d}$ slightly. 
\begin{Theorem}\label{MAINTHEOREM1}
Let $X$ be a multi-type CBI process with admissible parameters $(c,\beta,B,\nu,\mu)$, where $c_{1}=\dots=c_{d}=0$, and suppose that
\begin{enumerate}
\item[(a)] Condition (A) is satisfied for some $I\subset\{1,\dots,d\}$ and
$\alpha_{1},\dots,\alpha_{d}\in(0,2)$.
\item[(b)] There exists $\gamma_{0}\in(1,2]$ and $\tau\in(0,\gamma_{0}-1)$
such that
\[
\sum\limits _{j=1}^{d}\int\limits _{\R_{+}^{d}}\left(|z|^{\gamma_{0}}\1_{\{|z|\leq1\}}+|z|^{1+\tau}\1_{\{|z|>1\}}\right)\mu_{j}(dz)+\int\limits _{|z|>1}|z|^{1+\tau}\nu(dz)<\infty.
\]
If $I=\emptyset$, then we may also take $\tau=0$.
\item[(c)] It holds that $\alpha_{1},\dots,\alpha_{d}>\frac{\gamma_{0}}{1+\gamma_{0}}\gamma_{0}$.
Moreover, for each $j\in I$, one has $\alpha_{j}\geq1$.
\end{enumerate}
If $X(0)$ satisfies $\E[|X(0)|^{1 + \tau}] < \infty$, then for each $t>0$, 
$X(t)$ has a density $g_{t}$ on $\Gamma(I)$. 
Moreover, $f_{t}(x):=\rho_{I}(x)g_{t}(x)\in B_{1,\infty}^{\lambda,a}(\R^{d})$,
where $\lambda>0$ is small enough, $a$ is defined in \eqref{APPROX:03} and $\rho_{I}$ is given as in \eqref{EQ:05}. 
\end{Theorem}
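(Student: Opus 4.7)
The plan is to implement the Fournier--Printems--Debussche--Romito strategy of \cite{FP10,DF13,R17}, adapted to the boundary-weighted setting through the factor $\rho_I$. By a standard duality argument, the target regularity $\rho_I g_t \in B_{1,\infty}^{\lambda,a}(\R^d)$ follows from a uniform $L^1$-bound on $\rho_I g_t$ (immediate from first-moment estimates on $X(t)$ via hypothesis (b)) together with an inequality of the form
\begin{equation*}
\sup_{\|\phi\|_\infty\le1}\bigl|\E[\rho_I(X(t-s))(\phi(X(t)+he_k)-\phi(X(t)))]\bigr| \;\le\; C_t\,|h|^{\lambda/a_k}
\end{equation*}
for each coordinate $k$, small $|h|\le 1$, and some $s=s(h)\to 0$. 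In practice one first regularises the law of $X(t)$ by convolution with a small Gaussian and works with the regularised density, passing to the limit via the uniformity of the ensuing Besov estimate.

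For fixed $t>0$ and small $s\in(0,t)$, I will couple $X$ with an approximation
\begin{equation*}
\tilde X_s(t) \;:=\; X(t-s) + L^{X(t-s)}(s),
\end{equation*}
where $L^x$ is the L\'evy process of condition (A), built on the same probability space as $X$ via its SDE representation and by freezing the state-dependent characteristics at $X(t-s)$ over the interval $[t-s,t]$. The affine shape of the coefficients, hypothesis (b), and $\E[|X(0)|^{1+\tau}]<\infty$ yield by a Burkholder--Davis--Gundy argument
\begin{equation*}
\E\bigl[|X(t)-\tilde X_s(t)|^{\gamma_0}\bigr] \;\le\; C(t)\, s^{\gamma_0},
\end{equation*}
the $\gamma_0$-integrability of $\mu_j$ on $\{|z|\le 1\}$ handling the small-jump error and the $(1+\tau)$-integrability handling the large-jump part.

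Conditionally on $\mathcal F_{t-s}$, $\tilde X_s(t)$ has density $z\mapsto g_s^{X(t-s)}(z-X(t-s))$ which, by (A), satisfies on $\{X(t-s)\in\Gamma(I)\}$:
\begin{equation*}
\int_{\R^d}|g_s^{X(t-s)}(z+he_k)-g_s^{X(t-s)}(z)|dz \;\le\; \frac{C\,|h|}{\rho_I(X(t-s))}\,s^{-1/\alpha_k}.
\end{equation*}
Decomposing the weighted test-function difference into a smoothing contribution from $\tilde X_s(t)$ plus approximation errors, conditional expectation on $\mathcal F_{t-s}$ turns the smoothing contribution into an integral against $\Delta_{-h}g_s^{X(t-s)}$; the weight $\rho_I(X(t-s))$ cancels the singular factor from (A), giving $\lesssim\|\phi\|_\infty|h|\,s^{-1/\alpha_k}$. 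The error terms are controlled by $\E[|X(t)-\tilde X_s(t)|^{\gamma_0}]^{1/\gamma_0}\lesssim s$, possibly raised to a H\"older exponent $1/\alpha_j$ arising from the H\"older regularity of $\rho_I$ in the $j$-th coordinate; the requirement $\alpha_j\ge 1$ for $j\in I$ in (c) ensures $1/\alpha_j\le 1$. Balancing $|h|\,s^{-1/\alpha_k}$ against $s$ by $s=|h|^\beta$ then yields a positive Besov exponent $\lambda/a_k$ precisely when $\alpha_k>\gamma_0^2/(1+\gamma_0)$, as imposed in (c).

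The hard part is handling the \emph{discontinuity} of the test function $\phi$ in the approximation step: since $\phi$ is only $L^\infty$, the short-time bound $\E[|X(t)-\tilde X_s(t)|^{\gamma_0}]\le C s^{\gamma_0}$ does not directly control $\phi(X(t))-\phi(\tilde X_s(t))$. This is circumvented either by smoothing $\phi$ along the conditional density $g_s^{X(t-s)}$ (at the cost of an additional $L^1$-modulus error, which is exactly what (A) supplies) or by the approximating-density trick of \cite{DF13,R17}. A secondary subtlety is the calibration between the approximation exponent $\gamma_0$ and the integrability exponent $1+\tau$ of (b): taking $\gamma_0>1$ and $\tau<\gamma_0-1$ provides just enough room for the optimisation to produce a strictly positive Besov regularity, while for $I=\emptyset$ the weight $\rho_I$ is trivial and $\tau=0$ becomes admissible, as stated.
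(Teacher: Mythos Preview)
Your overall strategy---freeze coefficients at time $t-s$, exploit the smoothing property (A) of $L^{X(t-s)}$, and balance against the approximation error---is exactly the paper's. But your central quantitative claim is wrong, and the whole argument hinges on it. You assert $\E[|X(t)-\tilde X_s(t)|^{\gamma_0}]\le C\,s^{\gamma_0}$, i.e.\ an approximation rate of order $s$. The small-jump freezing error has the form
\[
\sum_j\int_{t-s}^{t}\!\int_{|z|\le 1}\!\int_0^{\infty} z\bigl[\1_{\{r\le X_j(u-)\}}-\1_{\{r\le X_j(t-s)\}}\bigr]\,\widetilde N_j(du,dz,dr),
\]
and since $\int_0^{\infty}|\1_{\{r\le a\}}-\1_{\{r\le b\}}|^{\gamma_0}\,dr=|a-b|$, a $\gamma_0$-BDG bound produces a factor $s^{1/\gamma_0}$ times $\sup_u\E[|X_j(u)-X_j(t-s)|]^{1/\gamma_0}$. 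The latter is only $O(s^{1/\gamma_0})$: under (b) the process has $1/\gamma_0$-H\"older time regularity, not Lipschitz. Hence the true rate (in $L^\eta$ for admissible $\eta\le 1$) is $s^{\kappa}$ with $\kappa=\frac{1}{\gamma_0}+\frac{1}{\gamma_0^2}=\frac{1+\gamma_0}{\gamma_0^2}$, and it is precisely the condition $\alpha_k\kappa>1$, i.e.\ $\alpha_k>\gamma_0^2/(1+\gamma_0)$, that makes the balance $|h|\,s^{-1/\alpha_k}$ versus $s^{\kappa}$ yield a positive exponent. With your rate $s$, the balancing would succeed for every $\alpha_k>0$, so your sentence ``yields a positive Besov exponent precisely when $\alpha_k>\gamma_0^2/(1+\gamma_0)$'' is not derived but asserted. (Note also that only $(1+\tau)$-moments of $X$ with $\tau<\gamma_0-1$ are available, so $\E[|{\cdot}|^{\gamma_0}]$ already needs justification.)

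Two structural points separate your sketch from a working proof. First, $L^x$ from (A) contains neither the drift nor the big branching jumps; the paper's approximation therefore also freezes these (putting them into a $\F_{t-\e}$-measurable shift $U^\e(t)$), so that only the \emph{freezing} error of the big jumps enters, of order $s^{\frac{1}{1+\tau}(1+\frac{1}{\gamma_0})}>s^\kappa$. In your $\tilde X_s(t)$ the full big-jump increment remains, contributing $s^{1/(1+\tau)}$, which for some admissible $(\gamma_0,\tau)$ is strictly worse than $s^\kappa$ and would spoil the threshold. Second, the $L^\infty$-test-function difficulty you identify is resolved in the paper by working from the outset with $\phi\in C_b^{\eta,a}$ and a duality lemma (Lemma~\ref{LEMMA:02}): the H\"older regularity of $\phi$ gives $|\Delta_{he_i}\phi(X(t))-\Delta_{he_i}\phi(X^\e(t))|\le C\|\phi\|_{C_b^{\eta,a}}\max_j|X_j(t)-X_j^\e(t)|^{\eta/a_j}$, which is exactly what converts the moment bound on $X-X^\e$ into a usable estimate. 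Your suggested workarounds point in this direction but do not implement it.
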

We now make a few comments on Theorem \ref{MAINTHEOREM1}. 
\begin{Remark}
Under the above conditions, $X(t)$ has only a density on $\Gamma(I)$, i.e.
the distribution may be singular on the set $A = \{ x \in \R_+^d \ | \ x_i = 0, \ \ i \in I \}$.
However, if one has $\P[X(t) \in \Gamma(I)] = 1$, then $\P[ X(t) \in A] = 0$ and hence $X(t)$ has a density on all $\R_+^d$.
Since the branching and diffusion mechanism vanishes at the boundary, 
one cannot avoid  to study the boundary behavior of multi-type CBI processes.
For results applicable to one-dimensional processes we refer to \cite{CPU13}, \cite{DFM14} and \cite{FU14}, see also the references therein.
It is also possible to obtain sufficient conditions for $\P[X(t) \in \Gamma(I)] = 1$ in arbitrary dimension;
this will be studied in a seperate work.
\end{Remark}
Note that the particular choice $\gamma_{0}=2$ is always possible, in which case Theorem \ref{MAINTHEOREM1}
is precisely Theorem \ref{MAINTHEOREM}. For $\gamma_{0}<\frac{1+\sqrt{5}}{2}$,
one has $\frac{\gamma_{0}^{2}}{1+\gamma_{0}}<1$ and hence we may
take $\alpha_{i}\in(\frac{\gamma_{0}^{2}}{1+\gamma_{0}},1)$. In this
case smoothing by immigration (see Example \ref{EXAMPLE:00}.(c)) may occur, which gives the following corollary.
\begin{Corollary}
Let $X$ be a multi-type CBI process and suppose that
the same conditions as in Theorem \ref{MAINTHEOREM1} are satisfied.
If $X(0)$ satisfies $\E[|X(0)|^{1 + \tau}] < \infty$, then 
\[
 \P[X_{i}(t)=0,\ \ i\not\in I]=0, \ \ t > 0.
\]
\end{Corollary}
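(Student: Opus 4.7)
The plan is to observe that the claim is a direct consequence of absolute continuity, once one identifies which subset $I$ actually arises in this corollary. First, I would parse the hypotheses carefully: the paragraph immediately preceding the statement takes $\gamma_0 < \frac{1+\sqrt{5}}{2}$ and $\alpha_i \in (\frac{\gamma_0^{2}}{1+\gamma_0}, 1)$ for every $i \in \{1,\dots,d\}$, corresponding precisely to the immigration-driven smoothing of Example \ref{EXAMPLE:00}(c). Condition (c) of Theorem \ref{MAINTHEOREM1} requires $\alpha_j \geq 1$ for every $j \in I$, so in combination no index can lie in $I$. Hence $I = \emptyset$, and from $\rho_\emptyset \equiv \1_{\R_+^d}$ we read off $\Gamma(\emptyset) = \R_+^d$.

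Next, I would invoke Theorem \ref{MAINTHEOREM1} directly at this $I$: for every $t > 0$ the random variable $X(t)$ possesses a density $g_t$ on $\Gamma(\emptyset) = \R_+^d$. Since $X(t) \in \R_+^d$ almost surely by construction of a CBI process, this says precisely that the law of $X(t)$ is absolutely continuous with respect to the $d$-dimensional Lebesgue measure on $\R^d$. In particular, every Lebesgue-null subset of $\R^d$ carries zero probability under the law of $X(t)$.

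Finally, I would identify the event in question. With $I = \emptyset$ we have $\{X_i(t)=0, \ i \notin I\} = \bigcap_{i=1}^{d}\{X_i(t) = 0\} = \{X(t) = 0\}$, a singleton and therefore a Lebesgue-null set, so the preceding step immediately yields $\P[X(t)=0] = 0$. For free one gets the slightly stronger statement $\P[X_i(t) = 0] = 0$ for each individual $i$, because each coordinate hyperplane $\{x \in \R^d : x_i = 0\}$ is also Lebesgue null; a union bound then shows that $X(t)$ lies in the open cone $(0,\infty)^d$ with probability one.

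I do not anticipate a real obstacle. The only delicate point is the identification $I = \emptyset$; without it, the same density argument only kills the part of the event lying inside $\Gamma(I)$ and leaves a potential singular contribution on $\R_+^d \setminus \Gamma(I)$ unaccounted for, which is exactly the caveat raised in the Remark preceding the corollary. The constraint $\alpha_i < 1$ pinning $I = \emptyset$ is therefore doing all the structural work here, while the rest of the proof is a one-line consequence of Theorem \ref{MAINTHEOREM1}.
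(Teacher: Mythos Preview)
Your proof is correct and shares the paper's one-line mechanism: the event in question corresponds to a Lebesgue-null subset of the region where Theorem~\ref{MAINTHEOREM1} furnishes a density, hence has probability zero. The difference is one of scope. The paper does \emph{not} first argue that $I=\emptyset$; its entire proof reads ``The set $\{x\in\R_+^d \mid x_i=0,\ i\notin I\}\subset\Gamma(I)$ has Lebesgue measure zero. Since $X(t)$ has a density on $\Gamma(I)$, the assertion is proved.'' Thus the paper treats the corollary for a general $I$ satisfying the hypotheses of Theorem~\ref{MAINTHEOREM1}, whereas you interpret the preceding motivational paragraph (with $\alpha_i<1$) as an additional standing hypothesis that, via condition~(c), forces $I=\emptyset$. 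Your restriction buys you the clean containment $\{0\}\subset\Gamma(\emptyset)=\R_+^d$; for nonempty $I$ the paper's stated inclusion is not literally true on the boundary piece $\{x_j=0\text{ for some }j\in I\}$, which is exactly the singular region you flagged. In short: same argument, but you narrow the scope and in doing so sidestep the boundary subtlety the paper's formulation glosses over.
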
 
\begin{proof}
 The set $\{ x \in \R_+^d \ | \ x_i = 0, \ \ i \not \in I\} \subset \Gamma(I)$ has
 Lebesgue measure zero. Since $X(t)$ has a density on $\Gamma(I)$, the assertion is proved.
\end{proof}
Note that this corollary is only applicable in the presence of jumps from the
immigration. Indeed, if $\nu=0$, then condition (A) can be only satisfied for $I=\{1,\dots,d\}$.
Another sufficient condition for $\P[ X_i(t) = 0, \ \ i \in \{1,\dots, d\}] = 0$, $t > 0$, will be discussed in a seperate work.

Let us finally consider a particular case without diffusion where
the measures $\mu_{1},\dots,\mu_{d}$ have the specific form
\begin{align}
\mu_{k}(dz)=\widetilde{\mu}_{k}(dz_{k})\otimes\prod\limits _{j\neq k}\delta_{0}(dz_{j}),\ \ k\in\{1,\dots,d\},\label{EQ:10}
\end{align}
with $\widetilde{\mu}_{k}$ being Lévy measures on $\R_{+}$ satisfying
$\widetilde{\mu}_{k}(\{0\})=0$. In this case we obtain the following
analogue of our previous statements. 
\begin{Theorem}\label{MAINTHEOREM2}
Let $X$ be a multi-type CBI process with admissible parameters $(c,\beta,B,\nu,\mu)$ and assume that \eqref{EQ:10}
holds and that $c_{1}=\dots=c_{d}=0$. Moreover suppose that
\begin{enumerate}
\item[(a)] Condition (A) is satisfied for some $I\subset\{1,\dots,d\}$ and
$\alpha_{1},\dots,\alpha_{d}\in(0,2)$.
\item[(b)] For each $j\in\{1,\dots,d\}$ there exists $\gamma_{0}^{j}\in(1,2]$
and $\tau_{j}\in(0,\gamma_{0}^{j}-1)$ such that
\[
\int\limits _{\R_{+}}\left(z^{\gamma_{0}^{j}}\1_{\{z\leq1\}}+z^{1+\tau_{j}}\1_{\{z>1\}}\right)\widetilde{\mu}_{j}(dz)+\int\limits _{|z|>1}|z|^{1+\tau_{j}}\nu(dz)<\infty.
\]
If $I=\emptyset$, then we may also take $\tau_{1}=\dots=\tau_{d}=0$.
\item[(c)] It holds that $\alpha_{i}>\frac{\max\{\gamma_{0}^{1},\dots,\gamma_{0}^{d}\}}{1+\max\{\gamma_{0}^{1},\dots,\gamma_{0}^{d}\}}\gamma_{0}^{i}$.
Moreover, for each $j\in I$, one has $\alpha_{j}\geq1$.
\end{enumerate}
If $X(0)$ satisfies $\E[|X(0)|^{1 + \tau}] < \infty$, then for each $t>0$, 
$X(t)$ has a density $g_{t}$ on $\Gamma(I)$. Moreover, $f_{t}(x):=\rho_{I}(x)g_{t}(x)\in B_{1,\infty}^{\lambda,a}(\R^{d})$,
where $\lambda>0$ is small enough and $a$ is defined in \eqref{APPROX:03}.
\end{Theorem}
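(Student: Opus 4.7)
The plan is to mirror the proof of Theorem \ref{MAINTHEOREM1}, but to exploit the diagonal structure \eqref{EQ:10} so that each coordinate can be controlled by its own moment exponent $\gamma_0^j$ rather than by a single global constant. By the usual duality characterization, to show $f_t = \rho_I g_t \in B_{1,\infty}^{\lambda,a}(\R^d)$ it is enough to establish, for each $k \in \{1,\dots,d\}$ and $h \in [-1,1]$, a bound of the form
\[
\sup_{|\phi|\leq 1}\bigl|\E\bigl[\rho_I(X(t))\bigl(\phi(X(t)-he_k)-\phi(X(t))\bigr)\bigr]\bigr|\leq C\,|h|^{\lambda/a_k},
\]
after one has shown that $X(t)$ admits a density on $\Gamma(I)$, which is standard once the Besov bound is in place (via Fubini and the $L^1$-characterization of $B_{1,\infty}^{\lambda,a}$).

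First, fix $s \in (0, t_0 \wedge t)$, and write $X(t)$ as the value at time $s$ of the SDE, started from $X(t-s)$, driven by the compensated Poisson random measures associated with $\widetilde\mu_1,\dots,\widetilde\mu_d$ and $\nu$. Decompose this into a frozen part and a remainder: let $Y(s) = X(t-s) + s(\beta + BX(t-s)) + L^{X(t-s)}(s)$, where $L^{X(t-s)}$ is the L\'evy process with characteristic exponent $\Psi_{X(t-s)}$ in \eqref{EQ:01}, sampled independently on $[t-s,t]$; and let $E(s) = X(t) - Y(s)$ collect the freezing error. Conditioning on $\mathcal{F}_{t-s}$, condition (A) immediately yields, on the event $\{X(t-s) \in \Gamma(I)\}$,
\[
\|\Delta_{he_k}g_s^{X(t-s)}\|_{L^1(\R^d)} \leq C\,\rho_I(X(t-s))^{-1}\,|h|\,s^{-1/\alpha_k},
\]
so the contribution of $Y(s)$ to the target quantity is bounded by $C|h|s^{-1/\alpha_k}$ (the $\rho_I$ factor cancels against its reciprocal, up to a small perturbation controlled by the H\"older property of $\rho_I$, for which one uses $\alpha_j \geq 1$ for $j \in I$ as in condition (c)).

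Second, the error $E(s)$ is estimated coordinate by coordinate. Here the diagonal structure \eqref{EQ:10} is crucial: the $j$-th component of $E(s)$ involves only $\widetilde\mu_j$, so a Burkholder--Davis--Gundy estimate at the exponent $\gamma_0^j$ combined with the integrability from condition (b) gives
\[
\E\bigl[\rho_I(X(t-s))\,|E_j(s)|\bigr]\leq C\,s^{1/\gamma_0^j},
\]
with a finite constant $C$ depending on $\sup_{u\leq t}\E[|X(u)|^{1+\tau_j}]$, which in turn is controlled by $\E[|X(0)|^{1+\tau}]$ through classical moment bounds for CBI processes. This is converted into a spatial smoothness contribution via $|\phi(y+E_j(s)e_j) - \phi(y)| \leq 2\|\phi\|_\infty\wedge(\text{Lipschitz term})$ together with Markov's inequality, yielding a bound proportional to $s^{1/\gamma_*}$ with $\gamma_* := \max_j \gamma_0^j$.

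Finally, collecting the two estimates one obtains
\[
\|\Delta_{he_k}(\rho_I g_t)\|_{L^1(\R^d)}\leq C\bigl(|h|\,s^{-1/\alpha_k}+s^{1/\gamma_*}\bigr),
\]
and optimizing in $s$ via $s = |h|^{\alpha_k\gamma_*/(\alpha_k+\gamma_*)}$ gives the exponent $\alpha_k/(\alpha_k+\gamma_*)$ for $|h|$. The algebraic identity
\[
\frac{\alpha_i}{\alpha_i+\gamma_*} > \frac{\gamma_0^i}{1+\gamma_*} \quad \Longleftrightarrow \quad \alpha_i > \frac{\gamma_*}{1+\gamma_*}\gamma_0^i,
\]
which is exactly condition (c), produces a common $\lambda > 0$ such that, with the anisotropy $a$ defined by \eqref{APPROX:03}, the exponent $\lambda/a_k = \lambda\alpha_k/\overline{\alpha}$ lies below $\alpha_k/(\alpha_k+\gamma_*)$ for every $k$. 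This gives the desired Besov regularity of $\rho_I g_t$, from which the existence of the density $g_t$ on $\Gamma(I)$ follows.

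The main obstacle is the coordinate-wise bookkeeping in Steps 2--3: because different coordinates have different moment exponents $\gamma_0^j$, but the H\"older regularity of the weight $\rho_I$ and the freezing error both couple them through the same test function, the final time exponent is driven by the worst $\gamma_0^j$. Making the optimization balance in every direction simultaneously, while respecting $\alpha_j \geq 1$ on $I$ so that $\rho_I$ contributes no spurious singularity, is the delicate point, and the appearance of $\max_j \gamma_0^j$ in condition (c) is the precise reflection of this constraint.
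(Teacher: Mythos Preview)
Your skeleton matches the paper's approach (freeze coefficients over $[t-s,t]$, use condition (A) for the frozen L\'evy part, control the freezing error, optimize in $s$), but the freezing-error estimate in Step~2 is too crude and the final ``algebraic identity'' is simply false. Applying BDG once at exponent $\gamma_0^j$ to $E_j(s)$ gives, after integrating out $r$, a bound $\E[|E_j(s)|]\le C s^{1/\gamma_0^j}\sup_u\E[|X_j(u)-X_j(t-s)|]^{1/\gamma_0^j}$; you then absorb the last factor into a constant, obtaining only $s^{1/\gamma_0^j}$ (and hence $s^{1/\gamma_*}$). The paper's argument (Proposition~\ref{PROP:00} together with Lemma~\ref{LEMMA:01}) \emph{iterates}: the increment $\E[|X_j(u)-X_j(t-s)|]$ is itself $O(s^{1/\gamma_j})$, and because the linear drift $b$ couples all coordinates ($J_i(b)=\{1,\dots,d\}$) one is forced to take the worst time-H\"older exponent $1/\gamma^*$ with $\gamma^*=\max_j\gamma_0^j$. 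This two-step estimate gives the sharper rate $s^{\kappa_j}$ with $\kappa_j=\frac{1}{\gamma_0^j}\bigl(1+\frac{1}{\gamma^*}\bigr)$, and the balance condition $\kappa_i\alpha_i>1$ is \emph{exactly} condition~(c).

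Your one-step bound cannot recover condition~(c). Concretely, your claimed equivalence
\[
\frac{\alpha_i}{\alpha_i+\gamma_*}>\frac{\gamma_0^i}{1+\gamma_*}\ \Longleftrightarrow\ \alpha_i>\frac{\gamma_*}{1+\gamma_*}\gamma_0^i
\]
is wrong: cross-multiplying gives $\alpha_i>\gamma_0^i\gamma_*/(1+\gamma_*-\gamma_0^i)$, which is strictly stronger than~(c). For instance with $\gamma_0^i=\gamma_*=2$ your inequality demands $\alpha_i>4$, impossible since $\alpha_i<2$, while~(c) only asks $\alpha_i>4/3$. Hence under the hypotheses of the theorem your optimization does not close. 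A secondary issue: Lemma~\ref{LEMMA:02} requires test functions $\phi\in C_b^{\eta,a}$ with an explicit $\eta>0$ to be ``borrowed'' and ``repaid''; working with $\sup_{|\phi|\le1}$ and an unspecified ``Lipschitz term'' does not fit that framework, and the bound on $R_2$ must be $\varepsilon^{\eta\kappa_j/a_j}$ (as in Proposition~\ref{PROP:01}), not just $\varepsilon^{1/\gamma_*}$.
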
 Note that we have not assumed anything for the drift
component $B$. In some particular cases where $B$ does not mix different
components too much, it is possible to obtain results
with less restrictions on the parameters $\alpha_{i},\gamma_{0}^{i}$,
etc. It is possible, but would be awful, to formulate a general statement.
It is more convenient to apply the methods of this work directly to
particular models of this type.

\section{Main ingredients in the proofs}

\subsection{Anisotropic integration by parts}

Define the anisotropic Hölder-Zygmund space $C_{b}^{\lambda,a}(\R^{d})$
as the Banach space of functions $\phi$ with finite norm
\[
\|\phi\|_{C_{b}^{\lambda,a}}=\|\phi\|_{\infty}+\sum\limits _{k=1}^{d}\sup\limits _{h\in[-1,1]}|h|^{-\lambda/a_{k}}\|\Delta_{he_{k}}\phi\|_{\infty}.
\]
The following is our main technical tool for the existence of a density.
\begin{Lemma}\label{LEMMA:02} Let $a=(a_{1},\dots,a_{d})$ be an
anisotropy in the sense of \eqref{EQ:35} and $\lambda,\eta>0$ be
such that $(\lambda+\eta)/a_{k}\in(0,1)$ holds for all $k=1,\dots,d$.
Suppose that $q$ is a finite measure over $\R^{d}$ and there exists
$A>0$ such that, for all $\phi\in C_{b}^{\eta,a}(\R^{d})$ and all
$k=1,\dots,d$,
\begin{align}
\left|\int\limits _{\R^{d}}(\phi(x+he_{k})-\phi(x))q(dx)\right|\leq A\|\phi\|_{C_{b}^{\eta,a}}|h|^{(\lambda+\eta)/a_{k}},\ \ \forall h\in[-1,1].\label{EQ:32}
\end{align}
Then $q$ has a density $g$ with respect to the Lebesgue measure
such that
\begin{align*}
\|g\|_{B_{1,\infty}^{\lambda,a}}\leq q(\R^{d})+3dA(2d)^{\eta/\lambda}\left(1+\frac{\lambda}{\eta}\right)^{1+\frac{\eta}{\lambda}}.
\end{align*}
\end{Lemma}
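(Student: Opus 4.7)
The plan is to approximate $q$ by smooth densities $q_\epsilon := q * \rho_\epsilon$ for an anisotropic mollifier, pass to the limit to extract the density $g$ of $q$, and bound $\|g\|_{B_{1,\infty}^{\lambda,a}}$ via a careful optimization in $\epsilon$. I take $\rho \in C_c^\infty(\R^d)$ even and nonnegative with $\int\rho\,dx = 1$, and set $\rho_\epsilon(x) := \epsilon^{-d}\rho(\epsilon^{-a_1}x_1,\ldots,\epsilon^{-a_d}x_d)$; the anisotropy condition $a_1+\cdots+a_d=d$ ensures $\rho_\epsilon$ is a probability density. A direct calculation gives $\|\Delta_{h'e_j}\rho_\epsilon\|_{L^1}\leq C\min\{|h'|\epsilon^{-a_j},\,2\}$, from which one obtains the crucial norm estimate $\|\rho_\epsilon*\psi\|_{C_b^{\eta,a}}\leq C\epsilon^{-\eta}\|\psi\|_\infty$ for $\psi\in L^\infty(\R^d)$.

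The main duality estimate then reads: for any $h\in[-1,1]$ and $k\in\{1,\dots,d\}$,
$$\|\Delta_{he_k}q_\epsilon\|_{L^1} = \sup_{\|\psi\|_\infty\leq1}\left|\int\Delta_{-he_k}(\rho_\epsilon*\psi)\,dq\right|\leq CA\,\epsilon^{-\eta}\,|h|^{(\lambda+\eta)/a_k},$$
obtained by applying the hypothesis \eqref{EQ:32} to the test function $\phi = \rho_\epsilon*\psi$. To produce an actual density of $q$, I would show that $(q_{2^{-n}})_{n\geq0}$ is Cauchy in $L^1$. Writing $q_{\epsilon/2}-q_\epsilon = q*(\rho_{\epsilon/2}-\rho_\epsilon)$ and exploiting that $\rho_{\epsilon/2}-\rho_\epsilon$ has mean zero with anisotropic support of scale $\epsilon^{a_j}$ in the $j$-th coordinate, one decomposes this mean-zero kernel into a sum $\sum_{j=1}^d \Delta_{\zeta_j(\epsilon)e_j}W_j^\epsilon$ of one-directional increments with shifts $|\zeta_j(\epsilon)|\lesssim\epsilon^{a_j}$ and controlled $L^1$ norms of the bumps $W_j^\epsilon$. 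Applying the duality computation above term by term yields $\|q_{\epsilon/2}-q_\epsilon\|_{L^1}\lesssim\epsilon^\lambda$; summing the geometric series gives a limit $g:=\lim_n q_{2^{-n}}\in L^1$, and the weak convergence $q_\epsilon\to q$ identifies $g$ as the density of $q$, with $\|g\|_{L^1}\leq q(\R^d)$ and $\|g-q_\epsilon\|_{L^1}\lesssim\epsilon^\lambda$.

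For the Besov seminorm, I combine
$$\|\Delta_{he_k}g\|_{L^1}\leq 2\|g-q_\epsilon\|_{L^1}+\|\Delta_{he_k}q_\epsilon\|_{L^1}\leq C'\epsilon^\lambda + CA\epsilon^{-\eta}|h|^{(\lambda+\eta)/a_k}$$
and optimize in $\epsilon$. Differentiating the right-hand side selects $\epsilon\sim|h|^{1/a_k}$; the resulting balanced value gives an inequality of the form $|h|^{-\lambda/a_k}\|\Delta_{he_k}g\|_{L^1}\leq 3A(2d)^{\eta/\lambda}(1+\lambda/\eta)^{1+\eta/\lambda}$ per coordinate direction $k$, and summing over $k$ together with $\|g\|_{L^1}\leq q(\R^d)$ produces the claimed overall bound.

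The main obstacle I expect is the Cauchy step: the hypothesis \eqref{EQ:32} only controls quantities involving finite-difference shifts of $C_b^{\eta,a}$ test functions, so the mean-zero kernel $\rho_{\epsilon/2}-\rho_\epsilon$ must be explicitly rewritten as a combination of one-directional anisotropic increments before \eqref{EQ:32} can be invoked. This decomposition is precisely where the combinatorial factor $(2d)^{\eta/\lambda}$ and the dimensional factor $d$ in the final constant arise. Once convergence is secured, the remainder of the proof is a one-variable calculus optimization carried out with enough care to track the explicit prefactors.
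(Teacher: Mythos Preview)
The paper does not actually prove this lemma; it simply cites \cite{FJR18} (and, for the isotropic case $a_1=\dots=a_d=1$, \cite{DF13, DR14, F15}). Your proposal is precisely the standard argument underlying those references: anisotropic mollification, the duality estimate $\|\Delta_{he_k}q_\epsilon\|_{L^1}\le CA\,\epsilon^{-\eta}|h|^{(\lambda+\eta)/a_k}$ obtained by applying \eqref{EQ:32} to $\phi=\rho_\epsilon*\psi$, a Cauchy argument in $L^1$, and a one-variable optimisation in $\epsilon$. So the overall strategy is correct and agrees with the cited literature.

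One step deserves tightening. You propose to write the mean-zero kernel $\rho_{\epsilon/2}-\rho_\epsilon$ as a finite sum $\sum_j \Delta_{\zeta_j e_j}W_j^\epsilon$ of one-directional increments. Unless $\rho$ is a tensor product this decomposition is not immediate, and even then the intermediate bumps $W_j^\epsilon$ are only \emph{partially} mollified, so $\|\,\rho_\epsilon\text{-type kernel}*\psi\,\|_{C_b^{\eta,a}}$ may fail to be finite in the unmollified direction, which blocks the use of \eqref{EQ:32}. A cleaner way to run the Cauchy step is to telescope at the level of $q_\epsilon$ itself: for any $y\in\R^d$,
\[
\|\Delta_{-y}q_\epsilon\|_{L^1}\le \sum_{k=1}^d \|\Delta_{-y_k e_k}q_\epsilon\|_{L^1}\le CA\,\epsilon^{-\eta}\sum_{k=1}^d |y_k|^{(\lambda+\eta)/a_k},
\]
and integrating against $\rho_\delta(y)\,dy$ (supported where $|y_k|\lesssim\delta^{a_k}$) gives $\|q_\epsilon - q_\epsilon*\rho_\delta\|_{L^1}\le CdA\,\epsilon^{-\eta}\delta^{\lambda+\eta}$. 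Since $q_{\epsilon/2}*\rho_\epsilon=q_\epsilon*\rho_{\epsilon/2}$, two applications of this bound yield $\|q_{\epsilon/2}-q_\epsilon\|_{L^1}\lesssim dA\,\epsilon^\lambda$, and your remaining optimisation goes through unchanged.
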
 A proof of this Lemma is given in \cite{FJR18}. The
isotropic case, i.e. $a_{1}=\dots=a_{d}=1$, was first given in \cite{DF13,DR14,F15}.
Note that the restriction $(\lambda+\eta)/a_{k}\in(0,1)$, $k=1,\dots,d$,
is not essential since we may always replace $\lambda,\eta>0$ by
some smaller values which satisfy this condition and \eqref{EQ:32}.

\subsection{Multi-type CBI processes as strong solutions to stochastic equations}

Our proof relies on the representation of multi-type CBI processes
as solutions to a stochastic differential equation which is described
below. Let $(c,\beta,B,\nu,\mu)$ be a tuple of admissible parameters and $(\Omega,\F,\P)$ be a complete probability space
rich enough to support following objects
\begin{enumerate}
\item[(i)] A $d$-dimensional Brownian motion $W=(W(t))_{t\geq0}$.
\item[(ii)] Poisson random measures $N_{1},\dots,N_{d}$ on $\R_{+}\times\R_{+}^{d}\times\R_{+}$
with compensators
\[
\widehat{N}_{j}(du,dz,dr)=du\mu_{j}(dz)dr,\ \ j\in\{1,\dots,d\}.
\]
\item[(iii)] A Poisson random measure $N_{\nu}$ on $\R_{+}\times\R_{+}^{d}$
with compensator $\widehat{N}_{\nu}(ds,dz)=ds\nu(dz)$.
\end{enumerate}
The objects $W,N_{\nu},N_{1},\dots,N_{d}$ are supposed to be mutually
independent. Denote by $\widetilde{N}_{j}=N_{j}-\widehat{N}_{j}$,
$j\in\{1,\dots,d\}$, and $\widetilde{N}_{\nu}=N_{\nu}-\widehat{N}_{\nu}$
the corresponding compensated Poisson random measures. Let $X(0)$
be a random variable independent of the noise $W,N_{\nu},N_{1},\dots,N_{d}$.
Then
\begin{align}
X(t) & =X(0)+\int\limits _{0}^{t}\left(\beta+\widetilde{B}X(s)\right)ds+\sum\limits _{k=1}^{d}\sqrt{2c_{k}}e_{k}\int\limits _{0}^{t}\sqrt{X_{k}(s)}dW_{k}(s)+\int\limits _{0}^{t}\int\limits _{\R_{+}^{d}}zN_{\nu}(ds,dz)\label{SDE:CBI}\\
 & \ \ \ +\sum\limits _{j=1}^{d}\int\limits _{0}^{t}\int\limits _{|z|\leq1}\int\limits _{\R_{+}}z\1_{\{r\leq X_{j}(s-)\}}\widetilde{N}_{j}(ds,dz,dr)+\sum\limits _{j=1}^{d}\int\limits _{0}^{t}\int\limits _{|z|>1}\int\limits _{\R_{+}}z\1_{\{r\leq X_{j}(s-)\}}N_{j}(ds,dz,dr)\nonumber
\end{align}
has a pathwise unique strong solution, see \cite{BLP15}. Here $\widetilde{B}=(\widetilde{b}_{ij})_{i,j\in\{1,\dots,d\}}$
is obtained by changing the compensator of the jump operator involving
$(\mu_1,\dots, \mu_d)$. It is given by
\[
\widetilde{b}_{ij}=b_{ij}+\1_{\{i\neq j\}}\int\limits _{|z|\leq1}z_{i}\mu_{j}(dz)-\1_{\{i=j\}}\mu_{i}(\{|z|>1\}),\ \ i,j\in\{1,\dots,d\}.
\]
Note that $\widetilde{B}$ is well-defined and has non-negative off-diagonal
entries. An application of the It\^{o}-formula
shows that $X$ solves the martingale problem with generator \eqref{GENERATOR},
i.e. is a multi-type CBI process with admissible parameters $(c,\beta,B,\nu,\mu)$.

\subsection{Structure of the work}

This work is organized as follows. In Section 4 we provide a general
statement on the existence of densities for solutions to stochastic
equations with Hölder continuous coefficients driven by a Brownian
motion and a Poisson random measure. Our main results for multi-type
CBI processes are then deduced in Section 5 from the results obtained in Section 4. 
Section 6 is devoted to the discussion of sufficient conditions for (A), 
while particular examples illustrating how our main results from Section 2 can be applied are discussed in Section 7.
Some technical estimates for stochastic integrals with respect to Poisson random measures are collected in the appendix.

\section{A general criterion for existence of a density}

\subsection{Description of the model}

In this section we prove a general statement applicable to a wide
class of stochastic equations driven by Brownian motions and Poisson random measures. 
Such equations should, in particular, include \eqref{SDE:CBI}.
Motivated by multi-type CBI processes we consider unbounded coefficients
and treat the case of compensated small jumps, jumps of finite variation and big jumps separately.

Let $(\Omega,\F,(\F_{t})_{t\geq0},\P)$ be a stochastic basis with
the usual conditions, i.e. $(\Omega,\F,\P)$ is complete, $\F_{0}$
contains all $\P$-null sets and $(\F_{t})_{t\geq0}$ is a right-continuous
filtration over $\F$. Suppose that the stochastic basis is rich enough
to support the following objects
\begin{enumerate}
\item[(i)] A $d$-dimensional $(\F_{t})_{t\geq0}$-Brownian motion $W=(W(t))_{t\geq0}$.
\item[(ii)] An $(\F_{t})_{t\geq0}$-Poisson random measure $N$ with compensator
$\widehat{N}(du,dz)=dum(dz)$ on $\R_{+}\times E$, where $m$ is
a $\sigma$-finite measure on some Polish space $E$.
\end{enumerate}
Both terms are supposed to be independent. Denote by $\widetilde{N}=N-\widehat{N}$
the corresponding compensated Poisson random measure. Let $X(0)$
be an $\F_{0}$-measurable random variable independent of $W$ and
$N$. Consider an $(\F_{t})_{t\geq0}$-adapted cádlág-process $X=(X(t))_{t\geq0}$
satisfying
\begin{align}
X(t) & =X(0)+\int\limits _{0}^{t}b(X(u))du+\int\limits _{0}^{t}\sigma(X(t))dW(t)+\int\limits _{0}^{t}\int\limits _{E_{0}}\sigma^{0}(X({u-}),z)\widetilde{N}(du,dz)\label{EQ:03}\\
 & \ \ \ +\int\limits _{0}^{t}\int\limits _{E_{1}}\sigma^{1}(X({u-}),z)N(du,dz)+\int\limits _{0}^{t}\int\limits _{E_{2}}\sigma^{2}(X({u-}),z)N(du,dz),\nonumber
\end{align}
where $E=E_{0}\cup E_{1}\cup E_{2}$ and $E_{0},E_{1},E_{2}$ are
disjoint sets with $m(E_{2})<\infty$. We suppose that $b,\sigma:\R^{d}\longrightarrow\R^{d}$,
$\sigma^{0},\sigma^{1},\sigma^{2}:\R^{d}\times E\longrightarrow\R^{d}$
are measurable, and satisfy
\begin{align*}
\sup\limits _{|x|\leq R}\left(|b(x)|+|\sigma(x)|+\int\limits _{E_{0}}|\sigma^{0}(x,z)|^{2}m(dz)+\int\limits _{E_{1}}|\sigma^{1}(x,z)|m(dz)\right)<\infty,\ \ R>0.
\end{align*}
This implies, in particular, that the corresponding stochastic integrals
in \eqref{EQ:03} are well-defined. Here $E_{0}$ corresponds to small
(compensated) jumps, $E_{1}$ to jumps of finite variation and $E_{2}$
to big jumps. \begin{Remark}
\begin{enumerate}
\item[(i)] One typically absorbs the finite variation terms into the definition
of $\sigma^{0},\sigma^{2}$, i.e., one has $E^{1}=\emptyset$ and
$\sigma^{1}=0$. However, having applications in mind it is reasonable
to treat this cases differently.
\item[(ii)] At first one may think that \eqref{SDE:CBI} is more general, since
it contains different independent Poisson random measures. However,
since the particular form of $E$ is not specified, we also cover
this case as it is shown in Section 5.
\item[(iii)] It is straightforward to extend all results obtained below to time-dependent
coefficients.
\end{enumerate}
\end{Remark}

\subsection{Hölder regularity in time}

Motivated by \eqref{SDE:CBI}, we suppose that the coefficients of
\eqref{EQ:03} are Hölder continuous and not necessarily bounded.
Since an unbounded function $f$ might be Hölder continuous with exponent
$\gamma\in(0,1]$ without being Hölder continuous with exponent $\gamma'\in(0,\gamma)$,
we have to keep track of the Hölder continuity for each component
separately, see also Section 7 for particular examples. Below we suppose that the following conditions are satisfied:
\begin{enumerate}
\item[(B1)] For each $i\in\{1,\dots,d\}$, there exist $J_{i}(b)\subset\{1,\dots,d\}$,
$\theta_{i}(b)\in[0,1]$ and $C>0$ such that
\begin{align*}
|b_{i}(x)-b_{i}(y)|\leq C\sum\limits _{j\in J_{i}(b)}|x_{j}-y_{j}|^{\theta_{i}(b)}.
\end{align*}
\item[(B2)] For each $i\in\{1,\dots,d\}$, there exist $J_{i}(\sigma^{0}),J_{i}(\sigma^{1}),J_{i}(\sigma^{2})\subset\{1,\dots,d\}$,
$\theta_{i}(\sigma^{0}),\theta_{i}(\sigma^{1}),\theta_{i}(\sigma^{2})\in[0,1]$,
$\gamma_{i}(\sigma^{0})\in(1,2]$, $\gamma_{i}(\sigma^{1})\in(0,1]$,
$\gamma_{i}(\sigma^{2})\in(0,\gamma_{i}(\sigma^{0})]$ and $C>0$
such that
\[
\int\limits _{E_{k}}|\sigma_{i}^{k}(x,z)-\sigma_{i}^{k}(y,z)|^{\gamma_{i}(\sigma^{k})}m(dz)\leq C\sum\limits _{j\in J_{i}(\sigma^{k})}|x_{j}-y_{j}|^{\theta_{i}(\sigma^{k})\gamma_{i}(\sigma^{k})},\ \ \ k\in\{0,1,2\}.
\]
\item[(B3)] For each $i\in\{1,\dots,d\}$, there exists $J_{i}(\sigma)\subset\{1,\dots,d\}$
and $\theta_{i}(\sigma)\in[0,1]$ such that
\[
|\sigma_{ik}(x)-\sigma_{ik}(y)|\leq\sum\limits _{j\in J_{i}(\sigma)}|x_{j}-y_{j}|^{\theta_{i}(\sigma)},\ \ k\in\{1,\dots,d\}.
\]
\end{enumerate}
Thus $(\theta_{i}(b),\theta_{i}(\sigma),\theta_{i}(\sigma^{0}),\theta_{i}(\sigma^{1}),\theta_{i}(\sigma^{2}))$,
$i\in\{1,\dots,d\}$, describe the Hölder exponents for the coefficients
with respect to the space variables while the coupling of different
components is described by the sets $J_{i}(b),J_{i}(\sigma),J_{i}(\sigma^{0}),J_{i}(\sigma^{1}),J_{i}(\sigma^{2})$,
$i\in\{1,\dots,d\}$. These sets are motivated by the particular form
of \eqref{SDE:CBI}. Define
\[
\gamma_{i}=\max\{\1_{\sigma_{i}\neq0}2,\1_{\sigma_{i}^{0}\neq0}\gamma_{i}(\sigma^{0}),\1_{\sigma_{i}^{1}\neq0}\gamma_{i}(\sigma^{1}),\1_{\sigma_{i}^{2}\neq0}\gamma_{i}(\sigma^{2})\},
\]
where $\sigma_{i}=(\sigma_{i1},\dots,\sigma_{id})$, and similarly
let
\[
\gamma_{*,i}=\min\{\1_{\sigma_{i}\neq0}2,\1_{\sigma_{i}^{0}\neq0}\gamma_{i}(\sigma^{0}),\1_{\sigma_{i}^{1}\neq0}\gamma_{i}(\sigma^{1}),\1_{\sigma_{i}^{2}\neq0}\gamma_{i}(\sigma^{2})\}.
\]
We start with an estimate on time Hölder regularity for processes
$X$ given as in \eqref{EQ:03}. \begin{Lemma}\label{LEMMA:01} Suppose
that (B1) – (B3) are satisfied, fix $i\in\{1,\dots,d\}$ and let $\eta\in(0,\gamma_{*,i}]$.
Then, there exists a constant $C>0$ such that, for all $0\leq s\leq t\leq s+1$
and any $X$ as in \eqref{EQ:03}, one has
\[
\E[|X_{i}(t)-X_{i}(s)|^{\eta}]\leq C(t-s)^{\frac{\eta}{\gamma_{i}}}M_{i}(t,\eta),
\]
where the constant $C$ is independent of $X$, and
\begin{align*}
M_{i}(t,\eta) & =\sum\limits _{k=1}^{d}\sup\limits _{u\in[s,t]}\E[|\sigma_{ik}(X(u))|^{2}]^{\eta/2}+\sup\limits _{u\in[0,t]}\begin{cases}
\E[|b_{i}(X(u))|^{\eta}], & \eta\geq1\\
\E[|b_{i}(X(u))|]^{\eta}, & \eta\in(0,1)
\end{cases}\\
 & \ \ \ +\sup\limits _{u\in[0,t]}\E\left[\int\limits _{E_{0}}|\sigma_{i}^{0}(X(u),z)|^{\gamma_{i}(\sigma^{0})}m(dz)\right]^{\eta/\gamma_{i}(\sigma^{0})}\\
 & \ \ \ +\sup\limits _{u\in[0,t]}\E\left[\int\limits _{E_{1}}|\sigma_{i}^{1}(X(u),z)|^{\gamma_{i}(\sigma^{1})}m(dz)\right]^{\eta/\gamma_{i}(\sigma^{1})}\\
 & \ \ \ +\sup\limits _{u\in[0,t]}\E\left[\int\limits _{E_{2}}|\sigma_{i}^{2}(X(u),z)|^{\gamma_{i}(\sigma^{2})}m(dz)\right]^{\eta/\gamma_{i}(\sigma^{2})}.
\end{align*}
\end{Lemma}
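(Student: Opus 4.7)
The plan is to decompose
$$X_i(t)-X_i(s)=I_1+I_2+I_3+I_4+I_5,$$
where $I_1,\dots,I_5$ correspond to the drift, Brownian, compensated-$E_0$-jump, $E_1$-jump, and $E_2$-jump integrals in \eqref{EQ:03} over $[s,t]$. Using $|\sum_k I_k|^\eta\le 5^{\max(\eta-1,0)}\sum_k|I_k|^\eta$ (valid for $\eta\in(0,2]$), the task reduces to estimating $\E|I_k|^\eta$ for each $k$ separately. The key observation is that each $I_k$ has a natural time-Hölder exponent at least $\eta/\gamma_i$, so since $t-s\le 1$, all five contributions can be absorbed into a common factor $(t-s)^{\eta/\gamma_i}$.

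For the drift $I_1$, Hölder's inequality (when $\eta\ge 1$) or Jensen's inequality $\E|Y|^\eta\le(\E|Y|)^\eta$ (when $\eta<1$) yields $\E|I_1|^\eta\le(t-s)^{\eta}$ times exactly the drift summand of $M_i(t,\eta)$. For the Brownian term $I_2$, the Burkholder-Davis-Gundy inequality combined with Jensen's inequality applied to the concave map $x\mapsto x^{\eta/2}$ (since $\eta\le 2$) produces
$$\E|I_2|^\eta\le C\sum_{k=1}^{d}(t-s)^{\eta/2}\bigl(\sup_{u\in[s,t]}\E|\sigma_{ik}(X(u))|^{2}\bigr)^{\eta/2}.$$
For the compensated integral $I_3$ with exponent $\gamma_i(\sigma^0)\in(1,2]$, I would invoke a Bichteler-Jacod-Mémin style inequality (presumably one of the estimates collected in the appendix) to obtain
$$\E|I_3|^{\gamma_i(\sigma^0)}\le C\int_s^t\int_{E_0}\E|\sigma_i^0(X(u),z)|^{\gamma_i(\sigma^0)}\,m(dz)\,du,$$
and for the uncompensated integrals $I_4,I_5$, subadditivity $|\sum_j a_j|^\gamma\le\sum_j|a_j|^\gamma$ for $\gamma\le 1$ (or the BDG-type bound for $I_5$ when $\gamma_i(\sigma^2)\in(1,2]$, using that $m(E_2)<\infty$) gives analogous bounds. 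Since $\eta\le\gamma_{*,i}\le\gamma_i(\sigma^k)$ for every $k$ with $\sigma_i^k\neq 0$, Jensen's inequality $\E|Y|^\eta\le(\E|Y|^{\gamma_i(\sigma^k)})^{\eta/\gamma_i(\sigma^k)}$ transfers each $\gamma_i(\sigma^k)$-moment bound to the required $\eta$-moment, producing a time factor $(t-s)^{\eta/\gamma_i(\sigma^k)}$ and the corresponding moment raised to the power $\eta/\gamma_i(\sigma^k)$, exactly matching the $k$-th summand of $M_i(t,\eta)$.

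To conclude, since $\gamma_i=\max_k\gamma_i(\sigma^k)$ (with the Brownian case counted as $2$), one has $\eta/\gamma_i\le\eta/\gamma_i(\sigma^k)$, and $t-s\le 1$ yields $(t-s)^{\eta/\gamma_i(\sigma^k)}\le(t-s)^{\eta/\gamma_i}$; the drift factor $(t-s)^\eta$ is dominated in the same way. Summing the five contributions gives the claimed bound, with a constant $C$ depending only on $\eta$, $d$, the constants in (B1)-(B3) and the BDG constants, but not on $X$. The main technical obstacle I anticipate is the correct formulation and application of the Bichteler-Jacod-Mémin inequality for the compensated Poisson integral $I_3$ with a random, possibly unbounded integrand, for which I would simply cite the version in the appendix rather than redevelop it.
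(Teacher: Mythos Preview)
Your proposal is correct and follows essentially the same approach as the paper's proof: both decompose $X_i(t)-X_i(s)$ into the five integrals, bound the drift via H\"older/Jensen, the Brownian part via BDG, and the three Poisson integrals via the appendix estimates (Lemma~\ref{LEMMA:00}(a) for $I_3$ and Lemma~\ref{LEMMA:00}(b) for $I_4,I_5$), then use $t-s\le 1$ and $\gamma_i=\max\{2,\gamma_i(\sigma^0),\gamma_i(\sigma^1),\gamma_i(\sigma^2)\}$ to unify the time exponents. The only cosmetic difference is that you phrase the jump bounds as ``first estimate the $\gamma_i(\sigma^k)$-moment, then apply Jensen'', whereas the paper invokes Lemma~\ref{LEMMA:00} directly at level $\eta$; since that lemma internalizes exactly this two-step argument, the proofs coincide.
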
 \begin{proof} Observe that
\begin{align*}
 & \E[|X_{i}(t)-X_{i}(s)|^{\eta}]\leq C\E\left[\left|\int\limits _{s}^{t}b_{i}(X(u))du\right|^{\eta}\right]+C\sum\limits _{k=1}^{d}\E\left[\left|\int\limits _{s}^{t}\sigma_{ik}(X(u))dW_{k}(u)\right|^{\eta}\right]\\
 & \ \ \ +C\E\left[\left|\int\limits _{s}^{t}\int\limits _{E_{0}}\sigma_{i}^{0}(X(u-),z)\widetilde{N}(du,dz)\right|^{\eta}\right]\\
 & \ \ \ +C\E\left[\left|\int\limits _{s}^{t}\int\limits _{E_{1}}\sigma_{i}^{1}(X(u-),z)N(du,dz)\right|^{\eta}\right]+C\E\left[\left|\int\limits _{s}^{t}\int\limits _{E_{2}}\sigma_{i}^{2}(X(u-),z)N(du,dz)\right|^{\eta}\right].
\end{align*}
The first term is, for $\eta\geq1$, estimated by the Hölder inequality
\[
\E\left[\left|\int\limits _{s}^{t}b_{i}(X(u))du\right|^{\eta}\right]\leq C(t-s)^{\eta}\sup\limits _{u\in[0,t]}\E\left[|b_{i}(X(u))|^{\eta}\right]
\]
and for $\eta\in(0,1)$ we get
\[
\E\left[\left|\int\limits _{s}^{t}b_{i}(X(u))du\right|^{\eta}\right]\leq\E\left[\left|\int\limits _{s}^{t}b_{i}(X(u))du\right|\right]^{\eta}\leq(t-s)^{\eta}\sup\limits _{u\in[0,t]}\E\left[|b_{i}(X(u))|\right]^{\eta}.
\]
For the stochastic integral with respect to the Brownian motion we
obtain from the BDG-inequality
\begin{align*}
\E\left[\left|\int\limits _{s}^{t}\sigma_{ik}(X(u))dW_{k}(u)\right|^{\eta}\right] & \leq\E\left[\left|\int\limits _{s}^{t}|\sigma_{ik}(X(u))|^{2}du\right|^{\eta/2}\right]\\
 & \leq(t-s)^{\eta/2}\sup\limits _{u\in[s,t]}\E[|\sigma_{ik}(X(u))|^{2}]^{\eta/2}.
\end{align*}
For the second stochastic integral we get by Lemma \ref{LEMMA:00}.(a)
immediately
\[
\E\left[\left|\int\limits _{s}^{t}\int\limits _{E_{0}}\sigma_{i}^{0}(X(u-),z)\widetilde{N}(du,dz)\right|^{\eta}\right]\leq C(t-s)^{\eta/\gamma_{i}(\sigma^{0})}\sup\limits _{u\in[0,t]}\E\left[\int\limits _{E_{0}}|\sigma_{i}^{0}(X(u),z)|^{\gamma_{i}(\sigma^{0})}m(dz)\right]^{\eta/\gamma_{i}(\sigma^{0})}.
\]
For the third stochastic integral we apply Lemma \ref{LEMMA:00}.(b)
so that
\begin{align*}
\E\left[\left|\int\limits _{s}^{t}\int\limits _{E_{1}}\sigma_{i}^{1}(X(u-),z)N(du,dz)\right|^{\eta}\right] & \leq C(t-s)^{\eta/\gamma_{i}(\sigma^{1})}\sup\limits _{u\in[0,t]}\E\left[\int\limits _{E_{1}}|\sigma_{i}^{1}(X(u),z)|^{\gamma_{i}(\sigma^{1})}m(dz)\right]^{\eta/\gamma_{i}(\sigma^{1})}.
\end{align*}
For the last stochastic integral we obtain by Lemma \ref{LEMMA:00}.(b)
and $m(E_{2})<\infty$
\begin{align*}
\E\left[\left|\int\limits _{s}^{t}\int\limits _{E_{2}}\sigma_{i}^{2}(X(u-),z)N(du,dz)\right|^{\eta}\right]\leq C(t-s)^{\eta/\gamma_{i}(\sigma^{2})}\sup\limits _{u\in[0,t]}\E\left[\int\limits _{E_{2}}|\sigma_{i}^{2}(X(u),z)|^{\gamma_{i}(\sigma^{2})}m(dz)\right]^{\eta/\gamma_{i}(\sigma^{2})}{\color{red}.}
\end{align*}
The assertion now follows from $t-s\leq1$ and since $\gamma_{i}(\sigma^{0}),\gamma_{i}(\sigma^{1}),\gamma_{i}(\sigma^{2}),2\leq\gamma_{i}$.
\end{proof}

\subsection{The approximation}

For coefficients $b,\sigma,\sigma^{0},\sigma^{1},\sigma^{2}$ satisfying
(B1) – (B3), $i\in\{1,\dots,d\}$ and $k\in\{0,1,2\}$, define
\begin{align*}
\kappa(\sigma_{i}^{k}) & =\begin{cases}
\frac{1}{\gamma_{i}(\sigma^{k})}+\frac{\theta_{i}(\sigma^{k})}{\gamma_{i}^{*}}, & \sigma_{i}^{k}\text{ is not constant }\\
+\infty, & \sigma_{i}^{k}\text{ is constant }
\end{cases},\\
\kappa(\sigma_{i}) & =\begin{cases}
\frac{1}{2}+\frac{\theta_{i}(\sigma)}{\gamma_{i}^{*}}, & \sigma_{i}\text{ is not constant }\\
+\infty, & \sigma_{i}\text{ is constant }
\end{cases},\\
\kappa(b_{i}) & =\begin{cases}
1+\frac{\theta_{i}(b)}{\gamma_{i}^{*}}, & b_{i}\text{ is not constant }\\
+\infty, & b_{i}\text{ is constant }
\end{cases},
\end{align*}
where $\sigma_{i}=(\sigma_{i1},\dots,\sigma_{1d})$ and $\gamma_{i}^{*}=\max\{\gamma_{j}\ |\ j\in J_{i}(b)\cup J_{i}(\sigma)\cup J_{i}(\sigma^{k}),\ k\in\{0,1,2\}\}$.
Define
\begin{align*}
\kappa_{i} & =\min\left\{ \kappa(\sigma_{i}),\kappa(\sigma_{i}^{0}),\kappa(\sigma_{i}^{1}),\kappa(\sigma_{i}^{2}),\kappa(b_{i})\right\} .
\end{align*}
Hence, in cases where some of the coefficients $b_{i},\sigma_{i},\sigma_{i}^{0},\sigma_{i}^{1},\sigma_{i}^{2}$
are constant, we omit the corresponding terms in the definition of
$\kappa_{i}$ and set $J_{i}(b)=\emptyset$ or $J_{i}(\sigma)=\emptyset$
or $J_{i}(\sigma_{i}^{k})=\emptyset$, respectively. The following
is the main estimate for this section. \begin{Proposition}\label{PROP:00}
Suppose that (B1) – (B3) are satisfied and fix $i\in\{1,\dots,d\}$.
Moreover, suppose that, for each $j\in\{1,\dots,d\}$,
\begin{align}
\max\{\1_{J_{i}(b)}(j)\theta_{i}(b),\1_{J_{i}(\sigma)}(j)2\theta_{i}(\sigma),\1_{J_{i}(\sigma^{k})}(j)\theta_{i}(\sigma^{k})\gamma_{i}(\sigma^{k}),\}\leq\gamma_{*,j},\ \ k\in\{0,1,2\}.\label{ADMISSIBLE}
\end{align}
Let $X$ be as in \eqref{EQ:03} and define, for $t>0$ and $\e\in(0,1\wedge t]$,
the approximation $X_{i}^{\e}(t)=U_{i}^{\e}(t)+V_{i}^{\e}(t)$, where
\begin{align}
U_{i}^{\e}(t) & =X_{i}(t-\e)+\e b_{i}(X(t-\e))+\int\limits _{t-\e}^{t}\int\limits _{E_{2}}\sigma_{i}^{2}(X(t-\e),z)N(du,dz),\label{APPROX:00}\\
V_{i}^{\e}(t) & =\sum\limits _{k=1}^{d}\sigma_{ik}(X(t-\e))(W_{k}(t)-W_{k}(t-\e))\label{APPROX:01}\\
 & \ \ \ +\int\limits _{t-\e}^{t}\int\limits _{E_{0}}\sigma_{i}^{0}(X(t-\e),z)\widetilde{N}(du,dz)+\int\limits _{t-\e}^{t}\int\limits _{E_{1}}\sigma_{i}^{1}(X(t-\e),z)N(du,dz).\nonumber
\end{align}
Then, for any $0<\eta\leq1\wedge\gamma_{*,i}$,
\[
\E[|X_{i}(t)-X_{i}^{\e}(t)|^{\eta}]\leq C\e^{\eta\kappa_{i}}H_{i}(t,\eta),\ \ t > 0, \ \ \e\in(0,1\wedge t],
\]
where the constant $C > 0$ is independent of $\e$, $t$ and $X$, and
\begin{align*}
H_{i}(t,\eta) & =\sum\limits _{j\in J_{i}(\sigma^{0})}M_{j}(t,\theta_{i}(\sigma^{0})\gamma_{i}(\sigma^{0}))^{\eta/\gamma_{i}(\sigma^{0})}+\sum\limits _{j\in J_{i}(\sigma^{1})}M_{j}(t,\theta_{i}(\sigma^{1})\gamma_{i}(\sigma^{1}))^{\eta/\gamma_{i}(\sigma^{1})}\\
 & \ \ \ +\sum\limits _{j\in J_{i}(\sigma^{2})}M_{j}(t,\theta_{i}(\sigma^{2})\gamma_{i}(\sigma^{2}))^{\eta/\gamma_{i}(\sigma^{2})}+\sum\limits _{j\in J_{i}(b)}M_{j}(t,\theta_{i}(b))^{\eta}+\sum\limits _{j\in J_{i}(\sigma)}M_{j}(t,2\theta_{i}(\sigma))^{\eta/2}.
\end{align*}
\end{Proposition}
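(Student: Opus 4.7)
The plan is to decompose $X_i(t) - X_i^\varepsilon(t)$ into five pieces over $[t-\varepsilon, t]$, namely the drift and Brownian integrals with integrands $b_i(X(u))-b_i(X(t-\varepsilon))$ and $\sigma_{ik}(X(u))-\sigma_{ik}(X(t-\varepsilon))$, together with the three Poisson integrals whose integrands are $\sigma_i^k(X(u-),z)-\sigma_i^k(X(t-\varepsilon),z)$ for $k=0,1,2$. Since $\eta \in (0,1]$, subadditivity $|a_1+\cdots+a_5|^\eta \le |a_1|^\eta+\cdots+|a_5|^\eta$ reduces the task to bounding the $\eta$-th moment of each summand by $C\varepsilon^{\eta\kappa_i}H_i(t,\eta)$.

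For each piece I would apply the appropriate moment inequality to bring the $\eta$-power inside the time integral: Jensen for the drift, BDG combined with Jensen (using $\eta/2 \le 1$) for the Brownian term, Lemma \ref{LEMMA:00}.(a) for the compensated small jumps, and Lemma \ref{LEMMA:00}.(b) — together with $m(E_2)<\infty$ in the big-jump case — for the terms involving $\sigma^1$ and $\sigma^2$. This contributes a factor $\varepsilon^{\eta/\gamma_i(\cdot)}$ (with the effective values $\gamma_i(b):=1$ and $\gamma_i(\sigma):=2$) times a residual $\sup_u$-expression involving the $\gamma_i(\cdot)$-th moment of the coefficient increment. The Hölder bounds (B1)--(B3) then control this residual by $C\sum_{j \in J_i(\cdot)}\E[|X_j(u)-X_j(t-\varepsilon)|^{\theta_i(\cdot)\gamma_i(\cdot)}]$, to which Lemma \ref{LEMMA:01} applies with exponent $\theta_i(\cdot)\gamma_i(\cdot)$. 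Hypothesis \eqref{ADMISSIBLE} is precisely the statement that $\theta_i(\cdot)\gamma_i(\cdot) \le \gamma_{*,j}$, i.e. exactly what is needed for Lemma \ref{LEMMA:01} to be applicable, and it produces a further factor $\varepsilon^{\theta_i(\cdot)\gamma_i(\cdot)/\gamma_j}M_j(t,\theta_i(\cdot)\gamma_i(\cdot))$.

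Raising the accumulated estimate to the $\eta/\gamma_i(\cdot)$-power via subadditivity (valid since $\eta/\gamma_i(\cdot) \le 1$) gives an $\varepsilon$-exponent of $\eta\bigl(1/\gamma_i(\cdot)+\theta_i(\cdot)/\gamma_j\bigr)$, and using $\gamma_j \le \gamma_i^*$ together with $\varepsilon \le 1$ replaces $\gamma_j$ by $\gamma_i^*$; each piece is then bounded by $C\varepsilon^{\eta\kappa}$ times the corresponding $M_j^{\eta/\gamma_i(\cdot)}$-term with $\kappa \in \{\kappa(b_i),\kappa(\sigma_i),\kappa(\sigma_i^0),\kappa(\sigma_i^1),\kappa(\sigma_i^2)\}$. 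Summing over $j \in J_i(\cdot)$ reconstructs the corresponding summand of $H_i(t,\eta)$, and taking the minimum over the five $\kappa$-values produces $\varepsilon^{\eta\kappa_i}$.

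The hard part is purely bookkeeping: there are five pairs $(\theta_i(\cdot),\gamma_i(\cdot))$, each possibly coupling to different coordinates $j$ with different $\gamma_j$, and at every step one must check that the concavity inequalities $\eta\le 1$, $\eta/2 \le 1$, $\eta/\gamma_i(\sigma^k)\le 1$ are available and that \eqref{ADMISSIBLE} is tight enough to transmit the exponent through Lemma \ref{LEMMA:01} without any degradation in $\varepsilon$; the compensated-small-jump piece is the most delicate because $\gamma_i(\sigma^0)\in(1,2]$ forces the use of Lemma \ref{LEMMA:00}.(a) rather than BDG, yet after the subadditivity step the $\varepsilon$-exponent still comes out as $\eta\kappa(\sigma_i^0)$.
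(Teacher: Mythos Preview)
Your proposal is correct and follows essentially the same route as the paper's proof: the same five-term decomposition of $X_i(t)-X_i^{\varepsilon}(t)$, the same moment inequalities (Lemma~\ref{LEMMA:00}.(a) for $\sigma^0$, Lemma~\ref{LEMMA:00}.(b) for $\sigma^1,\sigma^2$, BDG for $\sigma$, Jensen for $b$), followed by (B1)--(B3) and Lemma~\ref{LEMMA:01}, with \eqref{ADMISSIBLE} ensuring that Lemma~\ref{LEMMA:01} is applicable at the required exponents. Your bookkeeping of the $\varepsilon$-exponents and the passage from $\gamma_j$ to $\gamma_i^*$ via $\varepsilon\le 1$ matches the paper exactly.
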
 \begin{proof} Fix $t>0$, $\e\in(0,1\wedge t]$
and let $\eta\in(0,1\wedge\gamma_{*,i}]$. Then
\begin{align*}
 & \ \E[|X_{i}(t)-X_{i}^{\e}(t)|^{\eta}]\leq R_{0}+R_{1}+R_{2}+R_{3}+R_{4},\\
R_{0} & =\E\left[\left|\int\limits _{t-\e}^{t}\int\limits _{E_{0}}\left(\sigma_{i}^{0}(X(u-),z)-\sigma_{i}^{0}(X(t-\e),z)\right)\widetilde{N}(du,dz)\right|^{\eta}\right],\\
R_{1} & =\E\left[\left|\int\limits _{t-\e}^{t}\int\limits _{E_{1}}\left(\sigma_{i}^{1}(X(u-),z)-\sigma_{i}^{1}(X(t-\e),z)\right)N(du,dz)\right|^{\eta}\right],\\
R_{2} & =\E\left[\left|\int\limits _{t-\e}^{t}\int\limits _{E_{2}}\left(\sigma_{i}^{2}(X(u-),z)-\sigma_{i}^{2}(X(t-\e),z)\right)N(du,dz)\right|^{\eta}\right],\\
R_{3} & =\E\left[\left|\int\limits _{t-\e}^{t}(b_{i}(X(u))-b_{i}(X(t-\e)))du\right|^{\eta}\right],\\
R_{4} & =\sum\limits _{k=1}^{d}\E\left[\left|\int\limits _{t-\e}^{t}\left(\sigma_{ik}(X(u))-\sigma_{ik}(X(t-\e))\right)dW_{k}(u)\right|^{\eta}\right].
\end{align*}
For $R_{0}$ we first apply Lemma \ref{LEMMA:00}, then condition
(B2) and finally Lemma \ref{LEMMA:01} to obtain
\begin{align*}
R_{0} & \leq C\e^{\eta/\gamma_{i}(\sigma^{0})}\sup\limits _{u\in(t-\e,t]}\E\left[\int\limits _{E_{0}}\left|\sigma_{i}^{0}(X(u-),z)-\sigma_{i}^{0}(X(t-\e),z)\right|^{\gamma_{i}(\sigma^{0})}m(dz)\right]^{\eta/\gamma_{i}(\sigma^{0})}\\
 & \leq C\e^{\eta/\gamma_{i}(\sigma^{0})}\sum\limits _{j\in J_{i}(\sigma^{0})}\sup\limits _{u\in(t-\e,t]}\E\left[|X_{j}(u-)-X_{j}(t-\e)|^{\theta_{i}(\sigma^{0})\gamma_{i}(\sigma^{0})}\right]^{\eta/\gamma_{i}(\sigma^{0})}\\
 & \leq C\e^{\eta/\gamma_{i}(\sigma^{0})}\sum\limits _{j\in J_{i}(\sigma^{0})}\e^{\eta\theta_{i}(\sigma^{0})/\gamma_{j}}M_{j}(t,\theta_{i}(\sigma^{0})\gamma_{i}(\sigma^{0}))^{\eta/\gamma_{i}(\sigma^{0})}.
\end{align*}
For $R_{1}$ we apply Lemma \ref{LEMMA:00}.(b), use assumption (B2)
and proceed as before to deduce
\begin{align*}
R_{1}\leq\e^{\eta/\gamma_{i}(\sigma^{1})}\sum\limits _{j\in J^{i}(\sigma^{1})}\e^{\eta\theta_{i}(\sigma^{1})/\gamma_{j}}M_{j}(t,\theta_{i}(\sigma^{1})\gamma_{i}(\sigma^{1}))^{\eta/\gamma_{i}(\sigma^{1})}.
\end{align*}
For $R_{2}$ we apply Lemma \ref{LEMMA:00}.(b) and proceed as before
to deduce
\begin{align*}
R_{2}\leq\e^{\eta/\gamma_{i}(\sigma^{2})}\sum\limits _{j\in J^{i}(\sigma^{2})}\e^{\eta\theta_{i}(\sigma^{2})/\gamma_{j}}M_{j}(t,\theta_{i}(\sigma^{2})\gamma_{i}(\sigma^{2}))^{\eta/\gamma_{i}(\sigma^{2})}.
\end{align*}
For $R_{3}$ we apply (B1) and Lemma \ref{LEMMA:01} so that
\begin{align*}
R_{3} & \leq C\E\left[\sum\limits _{j\in J_{i}(b)}\int\limits _{t-\e}^{t}|X_{j}(u-)-X_{j}(t-\e)|^{\theta_{i}(b)}du\right]^{\eta}\\
 & \leq C\e^{\eta}\sum\limits _{j\in J_{i}(b)}\sup\limits _{u\in(t-\e,t]}\E[|X_{j}(u-)-X_{j}(t-\e)|^{\theta_{i}(b)}]^{\eta}\\
 & \leq C\e^{\eta}\sum\limits _{j\in J_{i}(b)}\e^{\eta\theta_{i}(b)/\gamma_{j}}M_{j}(t,\theta_{i}(b))^{\eta}.
\end{align*}
For the last term we obtain from the BDG-inequality, (B3) and Lemma
\ref{LEMMA:01}
\begin{align*}
R_{4} & \leq C\e^{\eta/2}\sup\limits _{u\in(t-\e,t]}\E\left[\left|\sigma_{ik}(X(u))-\sigma_{ik}(X(t-\e))\right|^{2}\right]^{\eta/2}\\
 & \leq C\e^{\eta/2}\sum\limits _{j\in J_{i}(\sigma)}\sup\limits _{u\in(t-\e,t]}\E\left[|X_{j}(u)-X_{j}(t-\e)|^{2\theta_{i}(\sigma)}\right]^{\eta/2}\\
 & \leq C\e^{\eta/2}\sum\limits _{j\in J_{i}(\sigma)}\e^{\eta\theta_{i}(\sigma)/\gamma_{j}}M_{j}(t,2\theta_{i}(\sigma))^{\eta/2}.
\end{align*}
Collecting all estimates and using the definition of $H_{i}(t,\eta)$
and $\kappa_{i}$ gives the assertion. \end{proof}

\subsection{Main estimate}

In this section we prove our main estimate used to prove existence
of densities. For each $t\in(0,1]$ and $x\in\R^{d}$ define
\begin{align}
L^{x}(t):=\sigma(x)W(t)+\int\limits _{0}^{t}\int\limits _{E_{0}}\sigma^{0}(x,z)\widetilde{N}(du,dz)+\int\limits _{0}^{t}\int\limits _{E_{1}}\sigma^{1}(x,z)N(du,dz).\label{EQ:07}
\end{align}
The following condition guarantees that the noise part has some smoothing property. As usual write $1/0:=+\infty$.
\begin{enumerate}
\item[(B4)] There exist $\rho:\R^{d}\longrightarrow[0,\infty)$ and $(\alpha_{i})_{i\in\{1,\dots,d\}}\subset(0,2]$
such that $L^{x}(t)$ has, for each $t\in(0,1]$ and $x\in\Gamma:=\{y\in\R^{d}\ |\ \rho(y)>0\}$,
a density $g_{t}^{x}$ with respect to the Lebesgue measure and, for
all $i\in\{1,\dots,d\}$,
\begin{align}
\limsup\limits _{t\to0}t^{1/\alpha_{i}}\int\limits _{\R^{d}}\left|g_{t}^{x}(z+e_{i}h)-g_{t}^{x}(z)\right|dz\leq\frac{|h|}{\rho(x)},\ \ h\in[-1,1].\label{EQ:33}
\end{align}
\end{enumerate}
Note that, for $x\not\in\Gamma$, the right-hand side of \eqref{EQ:33}
equals to $+\infty$ in which case nothing has to be verified. 
The following is our main estimate for this section. 
\begin{Proposition}\label{PROP:01}
Assume that (B1) – (B4) are satisfied and suppose that \eqref{ADMISSIBLE}
holds for all $i\in\{1,\dots,d\}$. Let $(X(t))_{t\geq0}$ be as in
\eqref{EQ:03} with the additional properties
\begin{enumerate}
\item[(i)] There exists $\tau>0$ such that, for each $i\in\{1,\dots,d\}$,
one has
\[
G_{j,i}(t)<\infty,\qquad j\in J_{i}(b)\cup J_{i}(\sigma)\cup J_{i}(\sigma^{0})\cup J_{i}(\sigma^{1})\cup J_{i}(\sigma^{2}),
\]
where $\zeta_{i}=\max\{1,2\theta_{i}(\sigma),\theta_{i}(\sigma^{0})\gamma_{i}(\sigma^{0}),\theta_{i}(\sigma^{1})\gamma_{i}(\sigma^{1}),\theta_{i}(\sigma^{2})\gamma_{i}(\sigma^{2})\}$
and
\begin{align*}
G_{j,i}(t) & =\sum\limits _{k=1}^{d}\sup\limits _{u\in[0,t]}\E[|\sigma_{jk}(X(u))|^{2}]+\sup\limits _{u\in[0,t]}\E\left[\int\limits _{E_{0}}|\sigma_{j}^{0}(X(u),z)|^{\gamma_{i}(\sigma^{0})}m(dz)\right]\\
 & \ \ \ +\sup\limits _{u\in[0,t]}\E\left[\int\limits _{E_{1}}|\sigma_{j}^{1}(X(u),z)|^{\gamma_{i}(\sigma^{1})}m(dz)\right]+\sup\limits _{u\in[0,t]}\E\left[\int\limits _{E_{2}}|\sigma_{j}^{2}(X(u),z)|^{\gamma_{i}(\sigma^{2})}m(dz)\right]\\
 & \ \ \ +\sup\limits _{u\in[0,t]}\E\left[|b_{j}(X(u))|^{\zeta_{i}}\right]+\sup\limits _{u\in[0,t]}\E[\rho(X(u))^{1+\tau}],
\end{align*}
\item[(ii)] There exists $\delta>0$ such that, for any $t>0$ and $\e\in(0,1\wedge t]$,
\begin{align}
\E[|\rho(X(t))-\rho(X(t-\e))|]\leq C\e^{\delta},\label{EQ:22}
\end{align}
where $C=C_{t}>0$ is independent of $\e$ and locally bounded in
$t$.
\end{enumerate}
Let $a=(a_{i})_{i\in\{1,\dots,d\}}$ be an anisotropy and $\eta\in(0,1)$
with
\begin{align}
\left(1+\frac{1}{\tau}\right)\frac{\eta}{a_{i}}\leq1\wedge\gamma_{*,i},\ \ i\in\{1,\dots,d\}.\label{EXAMPLE:25}
\end{align}
Then there exists a constant $C=C_{t,\eta}>0$ (locally bounded in
$t$) and $\e_{0}\in(0,1\wedge t)$ such that, for any $\e\in(0,\e_{0})$,
$h\in[-1,1]$, $\phi\in C_{b}^{\eta,a}(\R^{d})$ and $i\in\{1,\dots,d\}$,
\[
\left|\E\left[\rho(X(t))\Delta_{he_{i}}\phi(X(t))\right]\right|\leq C\|\phi\|_{C_{b}^{\eta,a}}\left(|h|^{\eta/a_{i}}\e^{\delta}+|h|\e^{-1/\alpha_{i}}+\max\limits _{j\in\{1,\dots,d\}}\e^{\eta\kappa_{j}/a_{j}}\right).
\]
\end{Proposition}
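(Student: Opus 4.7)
The plan is to decompose the quantity into three pieces tailored to the three terms on the right-hand side, using the approximation $X^{\e}(t)=U^{\e}(t)+V^{\e}(t)$ from Proposition~\ref{PROP:00}. I write
\[
\rho(X(t))\Delta_{he_{i}}\phi(X(t))=E_{1}+E_{2}+E_{3},
\]
with $E_{1}=[\rho(X(t))-\rho(X(t-\e))]\Delta_{he_{i}}\phi(X(t))$, $E_{2}=\rho(X(t-\e))[\Delta_{he_{i}}\phi(X(t))-\Delta_{he_{i}}\phi(X^{\e}(t))]$ and $E_{3}=\rho(X(t-\e))\Delta_{he_{i}}\phi(X^{\e}(t))$. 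The term $E_{1}$ will produce the factor $|h|^{\eta/a_{i}}\e^{\delta}$, the term $E_{2}$ the factor $\max_{j}\e^{\eta\kappa_{j}/a_{j}}$, and the term $E_{3}$, where the smoothing (B4) enters, the factor $|h|\e^{-1/\alpha_{i}}$.

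For $E_{1}$, the definition of $\|\cdot\|_{C_{b}^{\eta,a}}$ yields $|\Delta_{he_{i}}\phi(X(t))|\le\|\phi\|_{C_{b}^{\eta,a}}|h|^{\eta/a_{i}}$, and hypothesis (ii) directly gives $|\E[E_{1}]|\le C\|\phi\|_{C_{b}^{\eta,a}}|h|^{\eta/a_{i}}\e^{\delta}$. For $E_{2}$, I would use the anisotropic Hölder-type bound $|\Delta_{he_{i}}\phi(y)-\Delta_{he_{i}}\phi(y')|\le C\|\phi\|_{C_{b}^{\eta,a}}\sum_{k}|y_{k}-y'_{k}|^{\eta/a_{k}}$ (handling the regime $|y_k-y'_k|>1$ by $L^\infty$ bound as usual) and then apply Hölder's inequality with conjugate exponents $(1+\tau,\,1+1/\tau)$ to write
\[
\E\bigl[\rho(X(t-\e))|X_{k}(t)-X_{k}^{\e}(t)|^{\eta/a_{k}}\bigr]\le \E\bigl[\rho(X(t-\e))^{1+\tau}\bigr]^{\frac{1}{1+\tau}}\E\bigl[|X_{k}(t)-X_{k}^{\e}(t)|^{(1+1/\tau)\eta/a_{k}}\bigr]^{\frac{\tau}{1+\tau}}.
\]
The first factor is controlled by (i). Assumption \eqref{EXAMPLE:25} is precisely designed so that the exponent $(1+1/\tau)\eta/a_{k}$ lies in $(0,1\wedge\gamma_{*,k}]$; hence Proposition~\ref{PROP:00} applies and produces an $\e^{(1+1/\tau)\eta\kappa_{k}/a_{k}}$ factor which, raised to the power $\tau/(1+\tau)$, is exactly $\e^{\eta\kappa_{k}/a_{k}}$. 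The auxiliary factors $H_{k}$ from Proposition~\ref{PROP:00} are finite by (i).

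The heart of the argument is $E_{3}$. The point is that $U^{\e}(t)$ is built from $\F_{t-\e}$-measurable data together with the jumps of $N$ on $(t-\e,t]\times E_{2}$, while $V^{\e}(t)$ is built from the Brownian increment $W(t)-W(t-\e)$ and the jumps of $N$ on $(t-\e,t]\times(E_{0}\cup E_{1})$, all with coefficients frozen at $X(t-\e)$. These two blocks of noise are mutually independent and independent of $\F_{t-\e}$, so the conditional law of $V^{\e}(t)$ given $\G:=\sigma(\F_{t-\e},U^{\e}(t))$ coincides with $g_{\e}^{X(t-\e)}$ in the sense of \eqref{EQ:07}. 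On $\{X(t-\e)\in\Gamma\}$, a change of variables gives
\[
\E\bigl[\Delta_{he_{i}}\phi(X^{\e}(t))\,\big|\,\G\bigr]=\int \phi(U^{\e}(t)+z)\bigl[g_{\e}^{X(t-\e)}(z-he_{i})-g_{\e}^{X(t-\e)}(z)\bigr]\,dz,
\]
and condition (B4), for $\e$ sufficiently small, bounds this in absolute value by $C\|\phi\|_{\infty}|h|\e^{-1/\alpha_{i}}/\rho(X(t-\e))$. Multiplying by $\rho(X(t-\e))$ cancels the singular denominator, while on $\{X(t-\e)\notin\Gamma\}$ one has $\rho(X(t-\e))=0$ and so $E_{3}=0$; taking expectations yields $|\E[E_{3}]|\le C\|\phi\|_{\infty}|h|\e^{-1/\alpha_{i}}$.

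The main obstacle I anticipate is the rigorous use of (B4): it is stated as a pointwise $\limsup$, so in principle the threshold $\e_{0}$ beyond which the bound $t^{1/\alpha_{i}}\int|g_{t}^{x}(z+he_{i})-g_{t}^{x}(z)|dz\le C|h|/\rho(x)$ holds could depend on $x=X(t-\e,\omega)$. Since the statement requires a deterministic $\e_{0}$, one must either extract a uniform-in-$x$ version of (B4) (which, as in assumption (A) for CBI processes, is what actually holds in the applications of interest) or argue via dominated convergence after first proving the estimate with an $\omega$-dependent threshold and then letting it shrink. Once that uniformity is pinned down, the rest of the proof is bookkeeping combining Proposition~\ref{PROP:00}, the Hölder-in-probability step of the previous paragraph, and the anisotropic Hölder scale.
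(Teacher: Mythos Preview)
Your proposal is correct and follows essentially the same approach as the paper: the identical three-term decomposition (the paper calls your $E_{1},E_{2},E_{3}$ by $R_{1},R_{2},R_{3}$), hypothesis (ii) for the first term, H\"older with exponents $(1+\tau,\,1+1/\tau)$ together with Proposition~\ref{PROP:00} for the second, and the conditioning/change-of-variables argument invoking (B4) for the third. You are in fact more careful than the paper in spelling out the independence structure behind the conditioning step and in flagging the uniformity-in-$x$ issue hidden in the $\limsup$ formulation of (B4); the paper silently extracts a deterministic $\e_{0}$, which is justified in all the intended applications via condition (A).
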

 \begin{proof} For $\e\in(0,1\wedge t)$ let $X^{\e}(t)$
be the approximation from Proposition \ref{PROP:00}. Then
\begin{align*}
\left|\E\left[\rho(X(t))\Delta_{he_{i}}\phi(X(t))\right]\right| & \leq R_{1}+R_{2}+R_{3},\\
R_{1} & =\left|\E\left[\Delta_{he_{i}}\phi(X(t))\left(\rho(X(t))-\rho(X(t-\e))\right)\right]\right|,\\
R_{2} & =\E\left[|\Delta_{he_{i}}\phi(X(t))-\Delta_{he_{i}}\phi(X^{\e}(t))|\rho(X(t-\e))\right],\\
R_{3} & =\left|\E\left[\rho(X(t-\e))\Delta_{he_{i}}\phi(X^{\e}(t))\right]\right|.
\end{align*}
For the first term we can use \eqref{EQ:22} to obtain
\begin{align*}
R_{1}\leq\|\phi\|_{C_{b}^{\eta,a}}|h|^{\eta/a_{i}}\E[|\rho(X(t))-\rho(X(t-\e))|]\leq C\|\phi\|_{C_{b}^{\eta,a}}|h|^{\eta/a_{i}}\e^{\delta}.
\end{align*}
For $R_{2}$, the Hölder inequality with $\frac{1}{1+\tau}+\frac{1}{1+\frac{1}{\tau}}=1$
implies
\begin{align*}
R_{2} & \leq C\|\phi\|_{C_{b}^{\eta,a}}\max\limits _{j\in\{1,\dots,d\}}\E\left[\rho(X(t-\e))|X_{j}(t)-X_{j}^{\e}(t)|^{\eta/a_{j}}\right]\\
 & \leq C\|\phi\|_{C_{b}^{\eta,a}}\sup\limits _{u\in[0,t]}\E\left[\rho(X(u))^{1+\tau}\right]^{1/(1+\tau)}\max\limits _{j\in\{1,\dots,d\}}\E\left[|X_{j}(t)-X_{j}^{\e}(t)|^{\left(1+\frac{1}{\tau}\right)\frac{\eta}{a_{j}}}\right]^{\frac{\tau}{1+\tau}}\\
 & \leq C\|\phi\|_{C_{b}^{\eta,a}}\sup\limits _{u\in[0,t]}\E\left[\rho(X(u))^{1+\tau}\right]^{1/(1+\tau)}\max\limits _{j\in\{1,\dots,d\}}\e^{\eta\kappa_{j}/a_{j}},
\end{align*}
where in the last inequality we have used \eqref{EXAMPLE:25} and
$G_{j,i}(t)<\infty$ so that Lemma \ref{LEMMA:01} is applicable.
Let us turn to $R_{3}$. Let $g_{t}^{x}$ be the density given by
(B4) and write $X^{\e}(t)=U^{\e}(t)+V^{\e}(t)$, where $U^{\e}(t)$
and $V^{\e}(t)$ are given by \eqref{APPROX:00} and \eqref{APPROX:01}.
By (B4) there exists $\e_{0}>0$ small enough such that for any $\e\in(0,\e_{0})$,
\begin{align*}
R_{3} & =\left|\E\left[\int\limits _{\R^{d}}\rho(X(t-\e))(\Delta_{he_{i}}\phi)(U^{\e}(t)+z)g_{\e}^{X(t-\e)}(z)dz\right]\right|\\
 & =\left|\E\left[\int\limits _{\R^{d}}\rho(X(t-\e))\phi(U^{\e}(t)+z)(\Delta_{-he_{i}}g_{\e}^{X(t-\e)})(z)dz\right]\right|\leq C\|\phi\|_{C_{b}^{\eta,a}}|h|\e^{-1/\alpha_{i}},
\end{align*}
where we have used \eqref{EQ:33}. Summing up the estimates for $R_{0},R_{1},R_{2},R_{3}$
yields the assertion. \end{proof} \begin{Remark} Note that for bounded
coefficients $b,\sigma,\sigma^{0},\sigma^{1},\sigma^{2}$ the restriction
$G_{j,i}(t)<\infty$ is automatically satisfied. More generally, in
many cases it suffices to show that $X$ has finite second moments.
For the particular case of multi-type CBI processes even less is sufficient,
see Section 2. \end{Remark}

\subsection{Existence of the density}

The following is the main result on the existence of densities for
\eqref{EQ:03}. \begin{Theorem}\label{TH:00} Assume that (B1) –
(B4) are satisfied and suppose that \eqref{ADMISSIBLE} holds for
all $i\in\{1,\dots,d\}$ and,
\begin{align}
\kappa_{i}\alpha_{i}>1,\ \ \forall i\in\{1,\dots,d\}.\label{EQ:21}
\end{align}
Let $(X(t))_{t\geq0}$ be as in \eqref{EQ:03} with the properties
(i) and (ii) from Proposition \ref{PROP:01}. Define an anisotropy
$a=(a_{i})_{i\in\{1,\dots,d\}}$ and a mean order of smoothness $\overline{\alpha}$
as in \eqref{APPROX:03}. Then there exists $\lambda\in(0,1)$ such
that the finite measure $q_{t}$ given by
\begin{align*}
q_{t}(A)=\E[\rho(X(t))\1_{A}(X(t))],\ \ \forall A\subset\R^{d}\ \ \text{ Borel },
\end{align*}
has, for every $t>0$, a density $g_{t}\in B_{1,\infty}^{\lambda,a}(\R^{d})$
with respect to the Lebesgue measure and
\begin{align*}
\|g_{t}\|_{B_{1,\infty}^{\lambda,a}}\leq q_{t}(\R^{d})+h(t)(1\wedge t)^{-1/\alpha^{\mathrm{min}}},
\end{align*}
where $h:[0,\infty)\longrightarrow(0,\infty)$ is locally bounded
in $t$ and $\alpha^{\mathrm{min}}=\min\{\alpha_{1},\dots,\alpha_{d}\}$.
\end{Theorem}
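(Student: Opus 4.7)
The plan is to apply Lemma \ref{LEMMA:02} to the finite measure $q_{t}(dx)$. Since
\[
\int_{\R^{d}}(\phi(x+he_{k})-\phi(x))\, q_{t}(dx) = \E[\rho(X(t))\,\Delta_{he_{k}}\phi(X(t))],
\]
verifying the integration-by-parts bound \eqref{EQ:32} reduces to controlling exactly the quantity estimated by Proposition \ref{PROP:01}. First I would fix $\eta \in (0,1)$ small enough that \eqref{EXAMPLE:25} holds for every $i$; this is possible because all $a_{i}, \tau, \gamma_{*,i}$ are strictly positive. Proposition \ref{PROP:01} then furnishes, for any $\varepsilon \in (0,\varepsilon_{0})$,
\[
\bigl|\E[\rho(X(t))\,\Delta_{he_{k}}\phi(X(t))]\bigr| \leq C\|\phi\|_{C_{b}^{\eta,a}}\Bigl(|h|^{\eta/a_{k}}\varepsilon^{\delta} + |h|\,\varepsilon^{-1/\alpha_{k}} + \max_{j}\varepsilon^{\eta\kappa_{j}/a_{j}}\Bigr).
\]

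The next step is to optimize in $\varepsilon$, separately for each direction $k$. Setting $\varepsilon = |h|^{\beta_{k}}$ with $\beta_{k}>0$ to be chosen, the three contributions become $|h|^{\eta/a_{k}+\beta_{k}\delta}$, $|h|^{1-\beta_{k}/\alpha_{k}}$, and $\max_{j}|h|^{\beta_{k}\eta\kappa_{j}/a_{j}}$. To attain a bound of the form $|h|^{(\lambda+\eta)/a_{k}}$ required by Lemma \ref{LEMMA:02}, I need $\beta_{k}\delta \geq \lambda/a_{k}$, $\beta_{k}/\alpha_{k}\leq 1-(\lambda+\eta)/a_{k}$, and $\beta_{k}\eta\kappa_{j}\geq (\lambda+\eta)a_{j}/a_{k}$ for every $j$. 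Combining the last two yields the feasibility condition
\[
\frac{\lambda+\eta}{\eta}\,\max_{j}\frac{1}{\alpha_{j}\kappa_{j}} \leq 1 - \frac{(\lambda+\eta)\alpha_{k}}{\overline{\alpha}},
\]
where I used the identity $a_{i}\alpha_{i}=\overline{\alpha}$ from \eqref{APPROX:03}, which collapses $\frac{a_{j}}{a_{k}\kappa_{j}}\alpha_{k}^{-1}=\frac{1}{\alpha_{j}\kappa_{j}}$ and makes the relevant part of the feasibility constraint essentially independent of $k$. Letting $\lambda,\eta\downarrow 0$ reduces the condition to $\min_{j}\alpha_{j}\kappa_{j}\geq 1$, which by \eqref{EQ:21} holds with strict inequality; so by continuity I can choose $\lambda,\eta>0$ small and then $\beta_{k}>0$ satisfying all three constraints for each $k$, with the first (easy) constraint ensured by shrinking $\lambda$ further if necessary.

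The substitution $\varepsilon = |h|^{\beta_{k}}$ is only admissible when $|h|^{\beta_{k}}\leq \varepsilon_{0}\wedge (1\wedge t)$. For $|h|$ outside this range (the relevant case for small $t$), I fall back on $\varepsilon = \varepsilon_{0}\wedge(1\wedge t)$; since $(\lambda+\eta)/a_{k}\leq 1$ and $|h|\leq 1$, the dominant term $|h|\,\varepsilon^{-1/\alpha_{k}}$ is then bounded by $(1\wedge t)^{-1/\alpha^{\mathrm{min}}}|h|^{(\lambda+\eta)/a_{k}}$, and the first and third terms are trivially controlled. Assembling the two regimes gives \eqref{EQ:32} with constant $A\leq C_{1}(t)+C_{2}(t)(1\wedge t)^{-1/\alpha^{\mathrm{min}}}$ locally bounded in $t$, and Lemma \ref{LEMMA:02} immediately yields the density $g_{t}\in B_{1,\infty}^{\lambda,a}(\R^{d})$ together with the claimed norm estimate.

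The principal difficulty is thus the simultaneous feasibility of the exponent optimization: each direction $k$ imposes its own constraints, but the coupling $a_{k}\alpha_{k}=\overline{\alpha}$ baked into the anisotropy is exactly what turns $\min_{j}\alpha_{j}\kappa_{j}>1$ into a single uniform condition, and without the strict inequality in \eqref{EQ:21} there would be no margin to introduce a positive $\lambda$. The remaining work consists in tracking the locally-bounded-in-$t$ prefactors coming from Proposition \ref{PROP:01} (finiteness of which is guaranteed by hypothesis (i)) and from Lemma \ref{LEMMA:02}; this is routine bookkeeping.
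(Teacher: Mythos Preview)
Your proposal is correct and follows essentially the same route as the paper: apply Lemma \ref{LEMMA:02} to $q_t$, reduce \eqref{EQ:32} to the estimate of Proposition \ref{PROP:01}, and optimize $\varepsilon$ as a power of $|h|$, using $a_i\alpha_i=\overline{\alpha}$ to see that feasibility reduces to $\min_j \alpha_j\kappa_j>1$. The one cosmetic difference is that the paper chooses $\varepsilon=|h|^{c_i}(1\wedge t)$, which folds the time scaling directly into $\varepsilon$ and makes the factor $(1\wedge t)^{-1/\alpha_i}$ appear from the single term $|h|\varepsilon^{-1/\alpha_i}$ without a separate case analysis; your two-regime argument achieves the same bound but requires the extra check (which you sketch) that in the ``outside'' range the first and third terms do not spoil the power of $(1\wedge t)$.
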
 \begin{proof} Let $t>0$ be fixed. It suffices to
show that Lemma \ref{LEMMA:02} is applicable to $q_{t}$. 
Using \eqref{EQ:21} we obtain $\frac{\kappa_{j}}{a_{j}}>1/\overline{\alpha}$
for all $j\in\{1,\dots,d\}$ and hence $\frac{a_{j}}{\kappa_{j}}\frac{1}{a_{i}}<\frac{\overline{\alpha}}{a_{i}}=\alpha_{i}$
for all $i,j\in\{1,\dots,d\}$. Hence we find $\eta\in(0,1)$ and
$c_1,\dots, c_d > 0$ such that, for all $i,j\in\{1,\dots,d\}$,
\[
0<\left(1+\frac{1}{\tau}\right)\frac{\eta}{a_{i}}<1\wedge\gamma_{*,i},\qquad\frac{a_{j}}{\kappa_{j}}\frac{1}{a_{i}}< c_i < \alpha_{i}\left(1-\frac{\eta}{a_{i}}\right).
\]
Define
\[
\lambda=\min_{i,j\in\{1,\dots,d\}}\left\{ c_i\delta a_{i},\ a_{i}-\eta-\frac{a_{i}c_i}{\alpha_{i}},\ \eta\left(c_i a_{i}\frac{\kappa_{j}}{a_{j}}-1\right)\right\} >0.
\]
Let $\phi\in C_{b}^{\eta,a}(\R^{d})$. By Proposition \ref{PROP:01}
we obtain, for $h\in[-1,1]$, $\e=|h|^{c_i}(1\wedge t)$ and $i\in\{1,\dots,d\}$,
\begin{align*}
\left|\E\left[\rho(X(t))\Delta_{he_{i}}\phi(X(t))\right]\right| & \leq C\|\phi\|_{C_{b}^{\eta,a}}\left(|h|^{\eta/a_{i}}\e^{\delta}+|h|\e^{-1/\alpha_{i}}+\max\limits _{j\in\{1,\dots,d\}}\e^{\eta\kappa_{j}/a_{j}}\right)\\
 & \leq\frac{C\|\phi\|_{C_{b}^{\eta,a}}}{(1\wedge t)^{1/\alpha_{i}}}\left(|h|^{\eta/a_{i}+c_i\delta}+|h|^{1-c_i /\alpha_{i}}+\max\limits _{j\in\{1,\dots,d\}}|h|^{c_i\eta\kappa_{j}/a_{j}}\right)\\
 & =\frac{C\|\phi\|_{C_{b}^{\eta,a}}}{(1\wedge t)^{1/\alpha_{i}}}|h|^{\eta/a_{i}}\left(|h|^{c_i \delta}+|h|^{1-\eta/a_{i}-c_i /\alpha_{i}}+\max\limits _{j\in\{1,\dots,d\}}|h|^{c_i \eta\kappa_{j}/a_{j}-\eta/a_{i}}\right)\\
 & \leq\frac{C\|\phi\|_{C_{b}^{\eta,a}}}{(1\wedge t)^{1/\alpha_{i}}}|h|^{(\eta+\lambda)/a_{i}}.
\end{align*}
The assertion now follows from Lemma \ref{LEMMA:02}. \end{proof}
By inspection of the proof, we obtain the following extension. 
\begin{Remark}\label{REMARK:01}
Estimate \eqref{EQ:22} can be replaced by the integrability condition
\[
\sup\limits _{t\in[0,T]}\E[\rho(X(t))^{-1}]<\infty,\ \ \forall T>0.
\]
In such a case $X(t)$ has, for $t>0$, a density on $\R^{d}$ (not
only on $\Gamma$). 
\end{Remark}

\section{Application to multi-type CBI processes}

\subsection{Proof of Theorem \ref{MAINTHEOREM}}

Our aim is to show that Theorem \ref{TH:00} is applicable. Let us
first show that \eqref{SDE:CBI} is a particular case of \eqref{EQ:03}.
Indeed, letting $b(x):=\beta+\widetilde{B}x$ and $\sigma(x)=\mathrm{diag}(\sqrt{2c_{1}x_{1}},\dots,\sqrt{2c_{d}x_{d}})$
we see that the first two terms have the desired form. Concerning
the jumps let $E=\R_{+}^{d}\times\R_{+}\times\{1,\dots,d+1\}$ and
set $E_{0}=\{z\in\R_{+}^{d}\ |\ |z|\leq1\}\times\R_{+}\times\{1,\dots,d\}$,
$E_{2}=\{z\in\R_{+}^{d}\ |\ |z|>1\}\times\R_{+}\times\{1,\dots,d\}$,
$E_{1}=\R_{+}^{d}\times\R_{+}\times\{d+1\}$. Define the corresponding intensity measure $m(d\xi)$, where $\xi=(z,r,k)\in E$, by
\begin{align*}
m(d\xi)=\sum\limits _{j=1}^{d}\mu_{j}(dz)dr\delta_{j}(dk)+\nu(dz)\delta_{0}(dr)\delta_{d+1}(dk).
\end{align*}
Finally choose $\sigma_{i}^{1}(x,\xi)=z_{i}$ and
\begin{align*}
\sigma_{i}^{0}(x,\xi)=z_{i}\1_{\{r\leq x_{k}\}}\1_{\R_{+}}(x_{k})\1_{\{1,\dots,d\}}(k),\qquad\sigma_{i}^{2}(x,\xi)=z_{i}\1_{\{r\leq x_{k}\}}\1_{\R_{+}}(x_{k})\1_{\{1,\dots,d\}}(k).
\end{align*}
Then it is not difficult to see that \eqref{SDE:CBI} is equivalent
in law to \eqref{EQ:03} with paramters defined above.
It is easily seen from the It\^{o} formula that both equations pose the same martingale problem.
Hence they describe the same law, which is sufficient for our purposes.
Let us show that conditions (B1) – (B4) are satisfied. Indeed (B1) is satisfied
for $J_{i}(b)=\{1,\dots,d\}$ and $\theta_{i}(b)=1$. Concerning condition
(B2) we see that
\begin{align*}
\int\limits _{E_{0}}|\sigma_{i}^{0}(x,\xi)-\sigma_{i}^{0}(y,\xi)|^{2}m(d\xi) 
 &=\sum\limits _{k=1}^{d}\int\limits _{|z|\leq1}\int\limits _{0}^{\infty}|\1_{\{r\leq x_{k}\}}\1_{\R_{+}}(x_{k})-\1_{\{r\leq y_{k}\}}\1_{\R_{+}}(y_{k})|z_{i}^{2}m(dz,dr,\{k\})\\
 & =\sum\limits _{k=1}^{d}\int\limits _{|z|\leq1}z_{i}^{2}\mu_{k}(dz)|x_{k}-y_{k}|\\
 & \leq\max\limits _{j\in\{1,\dots,d\}}\int\limits _{|z|\leq1}|z|^{2}\mu_{j}(dz)\sum\limits _{k=1}^{d}|x_{k}-y_{k}|
\end{align*}
and hence we may choose $J_{i}(\sigma^{0})=\{1,\dots,d\}$, $\theta_{i}(\sigma^{0})=\frac{1}{2}$
and $\gamma_{i}(\sigma^{0})=2$. For the integral against $\sigma^{1}$
we obtain $\int_{E_{1}}|\sigma_{i}^{1}(x,\xi)-\sigma_{i}^{1}(y,\xi)|m(d\xi)=0$,
i.e. $J_{i}(\sigma^{1})=\emptyset$, $\theta_{i}(\sigma^{1})=1$ and
$\gamma_{i}(\sigma^{1})=1$. In the same way we show that
\begin{align*}
\int\limits _{E_{2}}|\sigma_{i}^{2}(x,\xi)-\sigma_{i}^{2}(y,\xi)|^{1+\tau}m(d\xi)\leq\max\limits _{j\in\{1,\dots,d\}}\int\limits _{|z|>1}|z|^{1+\tau}\mu_{j}(dz)\sum\limits _{k=1}^{d}|x_{k}-y_{k}|,
\end{align*}
i.e. $J_{i}(\sigma^{2})=\{1,\dots,d\}$, $\theta_{i}(\sigma^{2})=\frac{1}{1+\tau}$
and $\gamma_{i}(\sigma^{2})=1+\tau$. This shows that (B2) is satisfied.
Condition (B3) is clearly satisfied with $J_{i}(\sigma)=\{i\}$ and
$\theta_{i}(\sigma)=\frac{1}{2}$. For the noise part \eqref{EQ:07}
appearing in condition (B4) we obtain
\begin{align*}
L_{i}^{x}(t) & =\1_{\R_{+}}(x_{i})\sqrt{2c_{i}x_{i}}B_{i}(t)+\int\limits _{0}^{t}\int\limits _{E_{0}}z_{i}\1_{\{r\leq x_{k}\}}\1_{\R_{+}}(x_{k})\widetilde{N}(du,d\xi)+\int\limits _{0}^{t}\int\limits _{E_{1}}z_{i}N(du,d\xi)\\
 & =\1_{\R_{+}}(x_{i})\sqrt{2c_{i}x_{i}}B_{i}(t)+\sum\limits _{j=1}^{d}\int\limits _{0}^{t}\int\limits _{|z|\leq1}\int\limits _{\R_{+}}z_{i}\1_{\{r\leq x_{j}\}}\1_{\R_{+}}(x_{j})\widetilde{N}_{j}(du,dz,dr)+\int\limits _{0}^{t}\int\limits _{\R_{+}^{d}}z_{i}N_{\nu}(du,dz),
\end{align*}
where $N_{\nu},N_{1},\dots,N_{d}$ are given as in \eqref{SDE:CBI}
and the second equality holds in law. Hence $L_{i}^{x}$ given by
\eqref{EQ:07} is precisely \eqref{EQ:01}. In particular, \eqref{MAIN:ASSUMPTION}
is precisely (B4) with $\rho(x)=\min\{x_{1},\dots,x_{d}\}\1_{\R_{+}^{d}}(x)$.
Observe that $G_{ji}(t)$ satisfies
\[
G_{ji}(t)\leq C\left(1+\sup\limits _{u\in[0,t]}\E[|X(u)|]+\sup\limits _{u\in[0,t]}\E[|X(u)|^{1+\tau}]\right)\leq C\left(1+\sup\limits _{u\in[0,t]}\E[|X(u)|^{1+\tau}]\right),
\]
i.e. it suffices to show that the right-hand side is finite. However,
in view of assumption (b) from Theorem \ref{MAINTHEOREM}, this property
can be classically shown by Gronwall. Note that, by $\gamma_{i}(\sigma^{1})=1$,
one has $\gamma_{*,i}=1$ and hence \eqref{ADMISSIBLE} is satisfied.
Next observe that $\gamma_{i}=2$ and hence \eqref{EQ:22} follows
from
\begin{align}
\E[|\rho(X(t))-\rho(X(t-\e))|]\leq\sum\limits _{j=1}^{d}\E[|X_{j}(t)-X_{j}(t-\e)|^{1/\alpha_{j}}]\leq C\sum\limits _{j=1}^{d}\e^{\frac{1}{2\alpha_{j}}}\leq C\e^{1/4},\label{EQ:08}
\end{align}
where we have used Lemma \ref{LEMMA:01} which is applicable due to
$\gamma_{*,j}=1\geq\frac{3}{4}>1/\alpha_{j}$. Finally, we have $\kappa_{i}=\frac{3}{4}$
and hence \eqref{EQ:21} is equivalent to $\alpha_{i}>\frac{4}{3}$,
which proves the assertion.

\subsection{Proof of Theorem \ref{MAINTHEOREM1}}

We proceed similarly to the previous case. Namely, observe that \eqref{SDE:CBI},
with $c_{1}=\dots=c_{d}=0$, is equivalent in law to \eqref{EQ:03}
for the particular choice $\sigma(x)=0$ and $b,E,E_{0},E_{1},E_{2},m,\sigma^{0},\sigma^{1},\sigma^{2}$
the same as in the proof of Theorem \ref{MAINTHEOREM}. Conditions
(B1) – (B3) are satisfied for $J_{i}(\sigma)=J_{i}(\sigma^{1})=\emptyset$,
$J_{i}(\sigma^{0})=J_{i}(\sigma^{2})=J_{i}(b)=\{1,\dots,d\}$, $\theta_{i}(b)=1$,
$\theta_{i}(\sigma^{0})=\frac{1}{\gamma_{0}}$, $\theta_{i}(\sigma^{1})=1$,
$\theta_{i}(\sigma^{2})=\frac{1}{1+\tau}$, $\theta_{i}(\sigma)=1$,
$\gamma_{i}(\sigma^{0})=\gamma_{0}$, $\gamma_{i}(\sigma^{1})=1$
and $\gamma_{i}(\sigma^{2})=1+\tau$. The noise part \eqref{EQ:07}
appearing in condition (B4) is precisely \eqref{EQ:01}, i.e. (B4)
follows from condition (A) with $\rho(x)=\rho_{I}(x)$. Estimating
$G_{ji}$ as before, we see that, for $I=\emptyset$ and hence $\rho_{\emptyset}=1$,
we may take $\tau=0$. Condition \eqref{ADMISSIBLE} can be shown
as in the proof of Theorem \ref{MAINTHEOREM}. For \eqref{EQ:22}
we obtain
\[
\E[|\rho_{I}(X(t))-\rho_{I}(X(t-\e))|]\leq\sum\limits _{j\in I}\E[|X_{j}(t)-X_{j}(t-\e)|^{1/\alpha_{j}}].
\]
Since, for $j\in I$, we have $\alpha_{j}\geq1=\gamma_{*,j}$, we
may proceed exactly as in \eqref{EQ:08}. Finally, we have $\gamma_{i}=\gamma_{i}^{*}=\gamma_{0}$
and hence $\kappa_{i}=\frac{1}{\gamma_{0}}\left(1+\frac{1}{\gamma_{0}}\right)$.
Thus \eqref{EQ:21} is equivalent to $\alpha_{i}>\frac{\gamma_{0}}{1+\gamma_{0}}\gamma_{0}$,
which proves the assertion.

\subsection{Proof of Theorem \ref{MAINTHEOREM2}}

We proceed similarly to the previous cases. Namely, \eqref{SDE:CBI}
is equivalent in law to \eqref{EQ:03} for the same choice as in the
proof of Theorem \ref{MAINTHEOREM1}. A simple computation shows that
conditions (B1) – (B3) are satisfied for $J_{i}(\sigma)=J_{i}(\sigma^{1})=\emptyset$,
$J_{i}(\sigma^{0})=J_{i}(\sigma^{2})=\{i\}$, $J_{i}(b)=\{1,\dots,d\}$,
$\theta_{i}(b)=1$, $\theta_{i}(\sigma^{0})=\frac{1}{\gamma_{0}^{i}}$,
$\theta_{i}(\sigma^{1})=1$, $\theta_{i}(\sigma^{2})=\frac{1}{1+\tau_{i}}$,
$\theta_{i}(\sigma)=1$, $\gamma_{i}(\sigma^{0})=\gamma_{0}^{i}$,
$\gamma_{i}(\sigma^{1})=1$ and $\gamma_{i}(\sigma^{2})=1+\tau_{i}$.
The noise part \eqref{EQ:07} appearing in condition (B4) is precisely
\eqref{EQ:01}, i.e. (B4) follows from condition (A) with $\rho(x)=\rho_{I}(x)$.
The function $G_{ji}$ can be estimated exactly as before (here we
need that $J_{i}(\sigma^{0})=J_{i}(\sigma^{2})=\{i\}$). Using $\gamma_{*,j}=1$
we see that \eqref{ADMISSIBLE} is satisfied. Condition \eqref{EQ:22}
can be shown in the same way as in the proof of Theorem \ref{MAINTHEOREM1}.
Finally, we have $\gamma_{i}=\gamma_{0}^{i}$, thus $\gamma_{i}^{*}=\max\{\gamma_{0}^{1},\dots,\gamma_{0}^{i}\}=:\gamma^{*}$,
$\kappa_{i}=\frac{1}{\gamma_{0}^{i}}\left(1+\frac{1}{\gamma^{*}}\right)$.
Hence \eqref{EQ:21} is equivalent to $\alpha_{i}>\frac{\gamma^{*}}{1+\gamma^{*}}\gamma_{0}^{i}$,
which proves the assertion.

\section{On the smoothing property (A)}
The following is due to \citep[Lemma 3.3]{DF13}. 
\begin{Proposition}\label{PROP:03}
Let $Z$ be a L\'evy process with L\'evy measure $m$ and symbol
\[
\Psi_{m}(\xi)=\int\limits _{\R^{d}}\left(1+i\xi\cdot z\1_{\{|z|\leq1\}}-e^{i\xi\cdot z}\right)m(dz).
\]
Suppose that there exist $\alpha\in(0,2]$ and $c,C>0$ with
\begin{align*}
c|\xi|^{\alpha}\leq\mathrm{Re}(\Psi_{m}(\xi))\leq C|\xi|^{\alpha},\ \ \forall\xi\in\R^{d},\ \ |\xi|\gg1.
\end{align*}
Then for each $t>0$, $Z(t)$ has a smooth density
$f_{t}$ and there exists a constant $C>0$ such
that
\[
\|\nabla f_{t}\|_{L^{1}(\R^{d})}\leq Ct^{-1/\alpha},\ \ t>0.
\]
\end{Proposition}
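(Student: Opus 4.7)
The approach is Fourier-analytic. From the hypothesis together with the general fact $\operatorname{Re}\Psi_m\geq 0$ (which follows from the Lévy--Khintchine form), the characteristic function of $Z(t)$ satisfies $|\widehat{f_t}(\xi)|=e^{-t\operatorname{Re}\Psi_m(\xi)}\leq 1\wedge e^{-ct|\xi|^\alpha}$. Hence $\xi^\beta e^{-t\Psi_m(\xi)}$ is integrable for every multi-index $\beta$ and every $t>0$, and Fourier inversion produces a $C^\infty$ density $f_t$ with $\widehat{\partial_k f_t}(\xi)=-i\xi_k e^{-t\Psi_m(\xi)}$. I would focus on proving $\|\partial_k f_t\|_{L^1}\leq Ct^{-1/\alpha}$ for $t\in(0,1]$; for $t\geq 1$, the semigroup identity $f_t=f_1\ast f_{t-1}$ with Young's inequality gives $\|\partial_k f_t\|_{L^1}\leq\|\partial_k f_1\|_{L^1}$, a constant.

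As preparation I decompose $Z=Z_1+Z_2$, where $Z_1$ has Lévy measure $m|_{\{|z|\leq 1\}}$ (keeping the natural compensation) and $Z_2$ is the compound Poisson process of jumps with $|z|>1$. Then $f_t=f_t^{(1)}\ast\mu_t^{(2)}$ with $\mu_t^{(2)}$ a probability measure, so Young yields $\|\partial_k f_t\|_{L^1}\leq\|\partial_k f_t^{(1)}\|_{L^1}$. The point of this reduction is twofold: the symbol $\Psi_1$ of $Z_1$ has its Lévy measure supported in $\{|z|\leq 1\}$, whereas the two-sided hypothesis $c|\xi|^\alpha\leq\operatorname{Re}\Psi_m(\xi)\leq C|\xi|^\alpha$ forces the stable-type tail bound $m(\{|z|\geq r\})\leq Cr^{-\alpha}$ for small $r>0$. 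Standard integration by parts against this tail bound then yields the sharp derivative estimates
\[
|\partial_\xi^\gamma\Psi_1(\xi)|\leq C_\gamma(1+|\xi|)^{(\alpha-|\gamma|)_+},\qquad|\gamma|\geq 1,
\]
while the lower bound $\operatorname{Re}\Psi_1(\xi)\geq c|\xi|^\alpha$ at infinity is preserved because $\operatorname{Re}\Psi_2$ is globally bounded by $2m(\{|z|>1\})<\infty$.

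The core estimate splits $\int_{\R^d}|\partial_k f_t^{(1)}|dx$ at the natural scale $|x|=t^{1/\alpha}$. On the inner ball, Cauchy--Schwarz with the volume factor gives
\[
\int_{|x|\leq t^{1/\alpha}}|\partial_k f_t^{(1)}|dx\leq Ct^{d/(2\alpha)}\|\partial_k f_t^{(1)}\|_{L^2},
\]
and by Plancherel, together with the substitution $\xi=t^{-1/\alpha}\eta$ in $\int|\xi_k|^2 e^{-2t\operatorname{Re}\Psi_1(\xi)}d\xi$, one obtains $\|\partial_k f_t^{(1)}\|_{L^2}\leq Ct^{-(d+2)/(2\alpha)}$; the inner contribution is thus of order $t^{-1/\alpha}$. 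On the complement I use Cauchy--Schwarz with the weight $|x|^{-2N}$ for $N=2M$ an even integer with $4M>d$; Plancherel rewrites $\||x|^{2M}\partial_k f_t^{(1)}\|_{L^2}$ as the $L^2$-norm of $(-\Delta_\xi)^M[\xi_k e^{-t\Psi_1(\xi)}]$, which I expand by Faà di Bruno and estimate using the derivative bounds on $\Psi_1$. The same scaling substitution combines with the outer weight, $\bigl(\int_{|x|>t^{1/\alpha}}|x|^{-4M}dx\bigr)^{1/2}\sim t^{(d-4M)/(2\alpha)}$, to yield again $Ct^{-1/\alpha}$.

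The main technical obstacle is the bookkeeping in the outer region: after the Faà di Bruno expansion one encounters sums of terms of the shape $t^p\prod_j(\partial_\xi^{\gamma_j}\Psi_1)\cdot q(\xi)\cdot e^{-t\Psi_1(\xi)}$ whose $L^2$-norms must combine with the weight $|x|^{-2N}$ to give precisely $t^{-1/\alpha}$. The sharp derivative estimate for $\Psi_1$ (which rests on the two-sided bound on $\operatorname{Re}\Psi_m$) is essential for the powers of $t$ extracted from the Faà di Bruno expansion and from the scaling to cancel as required; the preliminary splitting of $Z$ is equally essential, since without it the derivatives of $\Psi_m$ would involve tail moments of $m$ that may be infinite and the scheme would not close.
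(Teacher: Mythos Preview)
The paper does not actually prove this proposition; it attributes the result to \cite[Lemma~3.3]{DF13}. Your Fourier-analytic argument---reduce to the small-jump part by convolution and Young's inequality, extract the tail bound $m(\{|z|>r\})\leq Cr^{-\alpha}$ from the upper symbol estimate in order to control derivatives of the truncated symbol, and then split the $L^1$-integral at the scale $|x|=t^{1/\alpha}$ handling each piece via Plancherel and Fa\`a di Bruno---is the standard route and is essentially the proof given in that reference. One minor point: your semigroup argument for $t\geq 1$ yields only $\|\partial_k f_t\|_{L^1}\leq\|\partial_k f_1\|_{L^1}$, not the stated decay $Ct^{-1/\alpha}$; in fact the bound as written for all $t>0$ is too strong in general (a truncated $\alpha$-stable process with $\alpha<2$ has finite second moment, so by the CLT $\|\partial f_t\|_{L^1}\asymp t^{-1/2}\gg t^{-1/\alpha}$ for large $t$). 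This is an imprecision in the statement rather than a gap in your proof: the paper only ever applies the estimate for $t\in(0,t_0)$ (see condition~(A)), and in that range your argument is complete.
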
 
Below we provide two sufficient conditions for (A). 
Our first result is a more general version of Example \ref{EXAMPLE:00}.(a).
\begin{Lemma}\label{LEMMA:SMOOTHING}
Define $I_{1}=\left\{ j\in\{1,\dots,d\}\ |\ c_{j}>0\right\} $ and
let $I_{2}:=\{1,\dots,d\}\backslash I_{1}$. Suppose that, for each
$j\in I_{2}$, there exists a L\'evy measure $\widetilde{\mu}_{j}$
on $\R_{+}$ with $\widetilde{\mu}_{j}(\{0\})=0$ and another Lévy
measure $\mu'$ on $\R_{+}^{d}$ with $\mu_{j}'(\{0\})=0$
satisfying \eqref{EQ:00} such that
\[
\mu_{j}(dz)=\widetilde{\mu}_{j}(dz_{j})\otimes\prod\limits _{k\neq j}\delta_{0}(dz_{k})+\mu'_{j}(dz).
\]
Moreover, assume that there exists $\alpha_{j}\in(0,2)$ and constants
$c,C>0$ with
\begin{align*}
c|\lambda|^{\alpha_{j}}\leq\int\limits _{|z|\leq1}\left(1-\cos(\lambda\cdot z)\right)\widetilde{\mu}_{j}(dz)\leq c|\lambda|^{\alpha_{j}},\ \ \lambda\in\R,\ \ |\lambda|\gg1.
\end{align*}
Then (A) is satisfied for $I=\{1,\dots,d\}$ and $\alpha_{j}=2\1_{I_{1}}(j)+\alpha_{j}\1_{I_{2}}(j)$.
\end{Lemma}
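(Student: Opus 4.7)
The plan is to reduce condition (A) to one-dimensional density estimates by splitting the Lévy symbol $\Psi_x$ of \eqref{EQ:01} into a decoupled ``smoothing'' part and an independent remainder, and then invoking Proposition \ref{PROP:03} coordinatewise.

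Since $\rho_I(x)=0$ outside $\Gamma(I)=\{x : x_1,\dots,x_d>0\}$, condition \eqref{MAIN:ASSUMPTION} is vacuous there, so I fix $x$ with all $x_j>0$. Substituting the decomposition $\mu_j=\widetilde{\mu}_j(dz_j)\otimes\prod_{k\neq j}\delta_0(dz_k)+\mu'_j(dz)$ for $j\in I_2$ into \eqref{EQ:01}, I would write $\Psi_x=\Psi_x^{(1)}+\Psi_x^{(2)}$ with
\[
\Psi_x^{(1)}(\lambda)=\sum_{j\in I_1}2c_jx_j\lambda_j^2+\sum_{j\in I_2}x_j\int_{|u|\leq 1}\bigl(1+i\lambda_j u-e^{i\lambda_j u}\bigr)\widetilde{\mu}_j(du),
\]
and $\Psi_x^{(2)}$ collecting the $\nu$-part and the $\mu'_j$-parts. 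Both are Lévy symbols, so $L^x(t)\stackrel{d}{=}Y^x(t)+Z^x(t)$ with $Y^x$ and $Z^x$ independent Lévy processes carrying these symbols. Once a density $p_t^x$ for $Y^x(t)$ is produced, the density $g_t^x$ of $L^x(t)$ is the convolution of $p_t^x$ with the law of $Z^x(t)$, and a standard Young-type estimate gives
\[
\int_{\R^d}|g_t^x(z+he_i)-g_t^x(z)|\,dz\leq\int_{\R^d}|p_t^x(z+he_i)-p_t^x(z)|\,dz.
\]

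Because $\Psi_x^{(1)}$ is a sum of terms each depending on a single coordinate, the components $Y_1^x,\dots,Y_d^x$ are independent 1D Lévy processes, $p_t^x$ factorizes, and Fubini reduces the right-hand side above to $\int_\R|p_{t,i}^{x_i}(w+h)-p_{t,i}^{x_i}(w)|\,dw$. For $i\in I_1$, $Y_i^x(t)$ is a centred Gaussian with variance $2c_ix_it$, yielding directly a bound $C|h|(x_it)^{-1/2}$. For $i\in I_2$, the symbol of $Y_i^x$ equals $x_i\widetilde{\psi}_i(\lambda)$ with $\widetilde{\psi}_i(\lambda)=\int_{|u|\leq 1}(1+i\lambda u-e^{i\lambda u})\widetilde{\mu}_i(du)$, so the scaling of characteristic exponents yields $Y_i^x(t)\stackrel{d}{=}\widetilde{Y}_i(x_it)$, where $\widetilde{Y}_i$ is the 1D Lévy process with Lévy measure $\widetilde{\mu}_i\1_{\{|u|\leq 1\}}$. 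The hypothesis gives $c|\lambda|^{\alpha_i}\leq\mathrm{Re}(\widetilde{\psi}_i(\lambda))\leq C|\lambda|^{\alpha_i}$ for $|\lambda|\gg 1$, so Proposition \ref{PROP:03} applies to $\widetilde{Y}_i$ and produces a smooth density $\widetilde{f}_s$ with $\|\partial_w\widetilde{f}_s\|_{L^1(\R)}\leq Cs^{-1/\alpha_i}$; evaluating at $s=x_it$ gives $\int_\R|p_{t,i}^{x_i}(w+h)-p_{t,i}^{x_i}(w)|\,dw\leq C|h|(x_it)^{-1/\alpha_i}$. In both cases the bound takes the form $\frac{C|h|}{x_i^{1/\alpha_i}}t^{-1/\alpha_i}$, and $\rho_I(x)=\min_jx_j^{1/\alpha_j}\leq x_i^{1/\alpha_i}$ converts it to \eqref{MAIN:ASSUMPTION}.

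The main obstacle is choosing the decoupling $\Psi_x=\Psi_x^{(1)}+\Psi_x^{(2)}$ sharply enough that $\Psi_x^{(1)}$ still captures all the anisotropic smoothing with the correct exponent in each coordinate, yet coarsely enough that $\Psi_x^{(2)}$ — where no regularity of $\nu$ or the $\mu'_j$ is available — can be absorbed into the harmless independent convolution by $Z^x$. This is precisely why the hypothesis imposes the two-sided polynomial bound on the 1D axis measures $\widetilde{\mu}_j$ alone. A minor technical check is that each $\widetilde{\mu}_j$ restricted to $|u|\leq 1$ is a valid 1D Lévy measure, which follows from the admissibility condition \eqref{EQ:00} on $\mu_j$.
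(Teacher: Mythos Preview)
Your argument is correct and mirrors the paper's proof almost exactly: split $\Psi_x$ into a decoupled smoothing part $\Psi_x^{(1)}$ carrying the diffusion for $j\in I_1$ and the axis measures $\widetilde\mu_j$ for $j\in I_2$, factorize the density of $Y^x(t)$ into independent one-dimensional pieces, apply Proposition~\ref{PROP:03} coordinatewise via the time-rescaling $Y_i^x(t)\stackrel{d}{=}\widetilde Y_i(x_it)$, and absorb the remainder through convolution. One small imprecision: your description of $\Psi_x^{(2)}$ as ``the $\nu$-part and the $\mu'_j$-parts'' omits the full jump terms $x_j\int_{|z|\leq1}(1+i\lambda\cdot z-e^{i\lambda\cdot z})\mu_j(dz)$ for $j\in I_1$ (the decomposition hypothesis is only imposed for $j\in I_2$), but since $\Psi_x^{(2)}=\Psi_x-\Psi_x^{(1)}$ only needs to be a L\'evy symbol this does not affect the argument.
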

\begin{proof}
Fix $x\in\R_{+}^{d}$ such that $x_{1},\dots,x_{d}>0$. Write $L^{x}(t)=L_{1}^{x}(t)+L_{2}^{x}(t)$
where $L_{1}^{x},L_{2}^{x}$ are independent Lévy processes with symbols
\begin{align*}
\Psi_{x}^{1}(\lambda) & =\sum\limits _{j\in I_{1}}2c_{j}x_{j}\lambda_{j}^{2}+\sum\limits _{j\in I_{2}}x_{j}\int\limits _{(0,1)}\left(1+i\lambda_{j}\cdot z-e^{i\lambda_{j}\cdot z}\right)\widetilde{\mu}_{j}(dz),\\
\Psi_{x}^{2}(\lambda) & =\sum\limits _{j\in I_{2}}x_{j}\int\limits _{|z|\leq1}\left(1+i\lambda\cdot z-e^{i\lambda\cdot z}\right)\mu'_{j}(dz)\\
 & \ \ \ +\sum\limits _{j\in I_{1}}x_{j}\int\limits _{|z|\leq1}\left(1+i\lambda\cdot z-e^{-\lambda\cdot z}\right)\mu_{j}(dz)+\int\limits _{\R_{+}^{d}}\left(1-e^{i\lambda\cdot z}\right)\nu(dz).
\end{align*}
Then $g_{t}^{x}=f_{t}^{1}\ast f_{t}^{2}$, where $f_{t}^{j}$ is the
infinite divisible distribution of $L_{i}^{x}$, $i\in\{1,2\}$.
Observe that, for $\lambda\in\R^{d}$ sufficiently large and $\delta=\min\{\alpha_{j}\ |\ j\in I_{2}\}\wedge2$,
\[
\mathrm{Re}(\Psi_{x}^{1}(\lambda))\geq C\min\{x_{1},\dots,x_{d}\}|\lambda|^{\delta}.
\]
Hence $f_{t}^{1}$ has a smooth density, and thus also $g_{t}^{x}$
has a smooth density. Let $(B_{j})_{j\in I_{1}}$ be a collection
of independent one-dimensional Brownian motions and let $(Z_{j})_{j\in I_{2}}$
be a collection of independent one-dimensional Lévy processes with
symbols
\[
\Psi_{Z_{j}}(\lambda)=\int\limits _{(0,1)}\left(1+i\lambda\cdot z-e^{i\lambda\cdot z}\right)\widetilde{\mu}_{j}(dz),\ \ \lambda\in\R,\ \ j\in I_{2}.
\]
All these processes are supposed to be mutually independent. Then
$L_{1}^{x}$ satisfies in law
\begin{align*}
L_{1}^{x}(t) &= \sum\limits _{j\in I_{1}}e_{j}B_{j}(2c_{j}x_{j}t)+\sum\limits _{j\in I_{2}}e_{j}Z_{j}(x_{j}t)
\end{align*}
and hence $f_{t}^{1}(z)=\prod_{j\in I_{1}}h_{2c_{j}x_{j}t}(z_{j})\cdot\prod_{j\in I_{2}}\widetilde{h}_{x_{j}t}^{j}(z_{j})$,
where $h_{t}(z)$ is the gaussian density of $B_{j}(t)$ and $\widetilde{h}_{t}^{j}(z)$
is the smooth density of $Z_{j}(t)$. By Proposition \ref{PROP:03}
we obtain
\[
\int\limits _{\R}\left|\frac{\partial h_{t}(z)}{\partial z}\right|dz\leq Ct^{-1/2},\qquad\int\limits _{\R}\left|\frac{\partial\widetilde{h}_{t}^{j}(z)}{\partial z}\right|dz\leq Ct^{-1/\alpha_{j}},\ \ t>0.
\]
Thus we obtain, for $j\in I_{1}$,
\begin{align*}
\int\limits _{\R^{d}}\left|\frac{\partial f_{t}^{1}(z)}{\partial z_{j}}\right|dz\leq\frac{C}{\sqrt{x_{j}}}t^{-1/2}\leq\frac{C}{\rho(x)}t^{-1/2},\ \ t>0,
\end{align*}
and similarly, for $j\in I_{2}$,
\begin{align*}
\int\limits _{\R^{d}}\left|\frac{\partial f_{t}^{1}(z)}{\partial z_{j}}\right|dz\leq\frac{C}{x_{j}^{1/\alpha_{j}}}t^{-1/\alpha_{j}}\leq\frac{C}{\rho(x)}t^{-1/\alpha_{j}},\ \ t>0.
\end{align*}
The assertion follows from
\begin{align*}
\int\limits _{\R^{d}}\left|g_{t}^{x}(z+he_{j})-g_{t}^{x}(z)\right|dz\leq|h|\int\limits _{\R^{d}}\left|\frac{\partial g_{t}^{x}(z)}{\partial z_{j}}\right|dz\leq|h|\int\limits _{\R^{d}}\left|\frac{\partial f_{t}^{1}(z)}{\partial z_{j}}\right|dz,\ \ j\in\{1,\dots,d\}.
\end{align*}
\end{proof}
It is also possible to obtain the smoothing property (A) from the jump measure of the immigration mechanism. 
Our second result is a more general version of Example \ref{EXAMPLE:00}.(c).
\begin{Lemma}\label{LEMMA:SMOOTHING1} Suppose
that there exists $\alpha\in(0,1)$ and constants $c,C>0$ such that
\[
c|\lambda|^{\alpha}\leq\int\limits _{\R_{+}^{d}}\left(1-\cos(\lambda\cdot z)\right)\nu(dz)\leq C|\lambda|^{\alpha},\ \ |\lambda|\gg1.
\]
Then (A) is satisfied for $\alpha=\alpha_{1}=\dots=\alpha_{d}$ and
$I=\emptyset$. 
\end{Lemma}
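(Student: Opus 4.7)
The plan is to isolate the $\nu$-driven component of the noise $L^x$, which is the only part whose symbol is bounded below by $|\lambda|^\alpha$ uniformly in $x$; this is what allows us to take $I=\emptyset$, so that $\rho_\emptyset \equiv 1$ on $\R_+^d$ and no degeneration factor $1/\rho(x)$ is needed on the right-hand side of \eqref{MAIN:ASSUMPTION}.

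Concretely, I would write $L^x(t) = Y^\nu(t) + R^x(t)$ as an independent sum, where $Y^\nu$ is the Lévy process with characteristic exponent
\[
\Psi^\nu(\lambda) = \int_{\R_+^d}\bigl(1 - e^{i\lambda \cdot z}\bigr)\,\nu(dz)
\]
(independent of $x$) and $R^x$ collects the Gaussian and the small branching terms appearing in \eqref{EQ:01}. The real part of $\Psi^\nu$ is exactly $\int(1-\cos(\lambda\cdot z))\,\nu(dz)$, so the hypothesis of the lemma reads
\[
c|\lambda|^\alpha \leq \mathrm{Re}(\Psi^\nu(\lambda)) \leq C|\lambda|^\alpha, \qquad |\lambda| \gg 1,
\]
which is precisely the input required by Proposition \ref{PROP:03}. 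That proposition then produces a smooth density $f_t^\nu$ of $Y^\nu(t)$ satisfying $\|\nabla f_t^\nu\|_{L^1(\R^d)} \leq C t^{-1/\alpha}$ for every $t>0$.

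The remaining step is routine: by independence of $Y^\nu$ and $R^x$, the law of $L^x(t)$ is the convolution of the absolutely continuous measure $f_t^\nu(z)dz$ with $\P_{R^x(t)}$, so $L^x(t)$ has a density $g_t^x$. A Fubini argument and the fundamental theorem of calculus then yield
\[
\int_{\R^d}\bigl|g_t^x(z+he_i) - g_t^x(z)\bigr|\,dz \leq \int_{\R^d}\bigl|f_t^\nu(z+he_i) - f_t^\nu(z)\bigr|\,dz \leq |h|\,\|\partial_i f_t^\nu\|_{L^1} \leq C|h|\,t^{-1/\alpha}
\]
for every $x \in \R_+^d$, $h \in [-1,1]$ and $i \in \{1,\dots,d\}$. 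Since $\rho_\emptyset(x)=\1_{\R_+^d}(x)=1$ for such $x$, this is exactly \eqref{MAIN:ASSUMPTION} with $I=\emptyset$ and $\alpha_1=\dots=\alpha_d=\alpha$.

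I do not anticipate any substantial obstacle: the one point worth checking is that Proposition \ref{PROP:03} needs only the two-sided bound on $\mathrm{Re}(\Psi^\nu)$ (the imaginary part is irrelevant for $|e^{-t\Psi^\nu(\lambda)}|$), and that the $x$-dependent remainder $R^x$ is absorbed harmlessly into a convolution, so the smoothing from $Y^\nu$ transfers verbatim to $L^x$.
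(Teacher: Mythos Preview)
Your proposal is correct and follows essentially the same route as the paper: split off the $\nu$-driven part of $L^x$, apply Proposition \ref{PROP:03} to its symbol (whose real part is exactly the integral in the hypothesis), and then pass the $L^1$-gradient bound through the convolution with the remaining $x$-dependent piece. The paper's proof is identical in structure, only slightly terser in the convolution step.
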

\begin{proof}
Write $L^{x}=L_{1}^{x}+L_{2}^{x}$ where $L_{j}^{x}$ are Lévy processes
with symbols
\begin{align*}
\Psi_{x}^{1}(\lambda) & =\int\limits _{\R_{+}^{d}}\left(1-e^{i\lambda\cdot z}\right)\nu(dz),\\
\Psi_{x}^{2}(\lambda) & =\sum\limits _{j=1}^{d}2c_{j}x_{j}\1_{\R_{+}}(x_{j})\lambda_{j}^{2}+\sum\limits _{j=1}^{d}x_{j}\1_{\R_{+}}(x_{j})\int\limits _{|z|\leq1}\left(1+i\lambda\cdot z-e^{i\lambda\cdot z}\right)\mu_{j}(dz).
\end{align*}
Then $g_{t}^{x}=f_{t}^{1}\ast f_{t}^{2}$, where $f_{t}^{j}$ is the
distribution of $L_{j}^{x}$, $j\in\{1,2\}$. Using Proposition \ref{PROP:03}
we see that \textcolor{magenta}{{}
\[
{\normalcolor \int\limits _{\R^{d}}|\nabla g_{t}^{x}(z)|dz\leq\int\limits _{\R^{d}}|\nabla f_{t}^{1}(z)|dz\leq Ct^{-1/\alpha},\ \ t\to0.}
\]
}
This proves the assertion.
\end{proof}

\section{Some examples}
In this section we provide some simple examples showing how our main
results from Section 2 can be applied. Let $(c,\beta,B,\mu,\nu)$
be admissible parameters with $\nu=0$ and suppose that there exist
$\alpha_{1},\dots,\alpha_{d}\in(1,2)$ such that
\[
\mu_{k}(dz)=\frac{dz_{k}}{z_{k}^{1+\alpha_{k}}}\otimes\prod\limits _{j\neq k}\delta_{0}(dz_{j})+\mu_{k}'(dz),\ \ k\in\{1,\dots,d\},
\]
where $\mu_{k}'$ are Lévy measures on $\R_{+}^{d}$ satisfying $\mu_{k}'(\{0\})=0$
and \eqref{EQ:00}. Then we obtain the following:
\begin{enumerate}
\item[(a)] Theorem \ref{MAINTHEOREM} is applicable, provided $\alpha_{1},\dots,\alpha_{d}>\frac{4}{3}$
and $\mu_{k}'$ integrates $\1_{\{|z|>1\}}|z|^{1+\tau}$, for some
$\tau\in(0,1)$ and all $k\in\{1,\dots,d\}$.
\item[(b)] If $c_{1}=\dots=c_{d}=0$, then Theorem \ref{MAINTHEOREM1} is applicable,
provided
\begin{align}
\min\{\alpha_{1},\dots,\alpha_{d}\}>\frac{\max\{\alpha_{1},\dots,\alpha_{d}\}^{2}}{1+\max\{\alpha_{1},\dots,\alpha_{d}\}},\label{EQ:06}
\end{align}
and $\mu_{k}'$ integrates $\1_{\{|z|>1\}}|z|^{1+\tau}$, for some
$\tau\in(0,1)$ and all $k\in\{1,\dots,d\}$. Note that \eqref{EQ:06}
is weaker than $\max\{\alpha_{1},\dots,\alpha_{d}\}>\frac{4}{3}$.
\item[(c)] Suppose that $c_{1}=\dots=c_{d}=0$ and $\mu_{k}'=0$. Then Theorem
\ref{MAINTHEOREM2} is applicable. Note that the corresponding multi-type
CBI process can also be obtained as the pathwise
unique strong solution to the Lévy driven stochastic equation
\begin{align*}
X_{i}(t) & =X_{i}(0)+\int\limits _{0}^{t}\left(\beta_{i}+\sum\limits _{j=1}^{d}b_{ij}X_{j}(s)\right)ds+\int\limits _{0}^{t}X_{i}(s-)^{1/\alpha_{i}}dZ_{i}(s),
\end{align*}
where $Z_{1},\dots,Z_{d}$ are independent one-dimensional Lévy processes
with symbols
\[
\Psi_{k}(\xi)=\int\limits _{0}^{\infty}\left(1+i\xi z-e^{i\xi z}\right)\frac{dz}{z^{1+\alpha_{k}}},\ \ \xi\in\R,\ \ k\in\{1,\dots,d\}.
\]
\end{enumerate}
We remark that the above statements in (a) - (c)
also hold for $\nu\neq0$, provided $\int_{|z|>1}|z|^{1+\tau}\nu(dz)<\infty$,
for some $\tau\in(0,1)$. Below we provide one example, where existence of a density
is deduced from the smoothing property of the immigration mechanism. 
\begin{Example} Let $(c,\beta,B,\mu,\nu)$
be admissible parameters with $c_{1}=\dots=c_{d}=0$, $\mu_{1},\dots,\mu_{d}$
are such that, for some $\gamma_{0}\in\left(1,\frac{1+\sqrt{5}}{2}\right)$,
\[
\int\limits _{\R_{+}^{d}}\left( |z|^{\gamma_{0}}\1_{\{|z|\leq1\}} + |z| \1_{ \{ |z| > 1\}} \right)\mu_{k}(dz)<\infty,\ \ k\in\{1,\dots,d\},
\]
and the immigration mechanism is given by
\[
\nu(dz)=\1_{\left\{ z\in\R_{+}^{d}\ |\ |z|\leq 1\right\}} (z)\frac{dz}{|z|^{d+\alpha}}+\nu'(dz),\ \ \alpha\in(0,1),
\]
where $\nu'$ is any measure supported on $\R_{+}^{d}$ satisfying
$\nu'(\{0\})=0$ and $\int_{\R_{+}^{d}}|z|\nu'(dz)<\infty$. 
Then Theorem \ref{MAINTHEOREM1} is applicable
with $I=\emptyset$ and $\alpha_{1}=\dots=\alpha_{d}=\alpha$, provided
$\alpha>\frac{\gamma_{0}}{1+\gamma_{0}}\gamma_{0}$. 
\end{Example}

\section{Appendix}
Below we prove some simple estimates on the moments of stochastic
integrals with respect to Poisson random measures. Similar results
for the Lévy noise case were obtained in \citep[Lemma 5.2]{DF13}.
\begin{Lemma}\label{LEMMA:00} Let $N(du,dz)$ be a Poisson random
measure with compensator $\widehat{N}(du,dz)=dum(dz)$ on $\R_{+}\times E$,
where $m(dz)$ is a $\sigma$-finite measure on some Polish space
$E$. The following assertions hold.
\begin{enumerate}
\item[(a)] Let $0<\eta\leq\gamma$ and $1\leq\gamma\leq2$. Then there exists
a constant $C>0$ such that, for any predictable process $H(u,z)$
and $0\leq s\leq t\leq s+1$,
\[
\E\left[\left|\int\limits _{s}^{t}\int\limits _{E}H(u,z)\widetilde{N}(du,dz)\right|^{\eta}\right]\leq C(t-s)^{\eta/\gamma}\sup\limits _{u\in[s,t]}\E\left[\int\limits _{E}|H(u,z)|^{\gamma}m(dz)\right]^{\eta/\gamma},
\]
provided the stochastic integral is well-defined.
\item[(b)] Let $0<\eta\leq\gamma\leq2$. Then there exists a constant $C>0$
such that, for any predictable process $H(u,z)$ and $0\leq s\leq t\leq s+1$,
\begin{align*}
\E\left[\left|\int\limits _{s}^{t}\int\limits _{E}H(u,z)N(du,dz)\right|^{\eta}\right] & \leq C(t-s)^{\eta/\gamma}\sup\limits _{u\in[s,t]}\E\left[\int\limits _{E}|H(u,z)|^{\gamma}m(dz)\right]^{\eta/\gamma}\\
 & \ \ \ +C\1_{\gamma\in(1,2]}(t-s)^{\eta/\gamma}\sup\limits _{u\in[s,t]}\E\left[\left(\int\limits _{E}|H(u,z)|m(dz)\right)^{\gamma}\right]^{\eta/\gamma},
\end{align*}
provided the stochastic integral is well-defined.
\end{enumerate}
\end{Lemma}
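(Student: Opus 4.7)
The plan is to reduce both parts to a classical Bichteler-Jacod type $L^\gamma$-estimate for the compensated integral in the range $\gamma\in[1,2]$, and then use Jensen's inequality to drop the exponent from $\gamma$ down to $\eta$. This parallels the Lévy-noise argument in \citep[Lemma 5.2]{DF13}; the only adaptation is to keep the constant independent of $s,t$ and to use $t-s\leq 1$ to absorb power losses.

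For part (a), I would first prove the case $\eta=\gamma\in[1,2]$ via the Bichteler-Jacod inequality
\[
\E\left[\left|\int_s^t\int_E H(u,z)\widetilde N(du,dz)\right|^\gamma\right]\leq C_\gamma\,\E\left[\int_s^t\int_E |H(u,z)|^\gamma m(dz)du\right],
\]
which, after pulling the $du$ integral out by a trivial bound, gives the stated estimate with exponent $\gamma$ and prefactor $(t-s)$. To obtain the general $\eta\in(0,\gamma]$, apply Jensen's inequality $\E[|M|^\eta]\leq\E[|M|^\gamma]^{\eta/\gamma}$ to the stochastic integral $M$; this converts the prefactor $(t-s)$ to $(t-s)^{\eta/\gamma}$ and the supremum term to its $\eta/\gamma$-power, exactly as claimed.

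For part (b), I would split into two regimes according to whether $\gamma\leq 1$ or $\gamma\in(1,2]$. In the sub-additive regime $\gamma\in(0,1]$, exploit $|\sum_k x_k|^\gamma\leq\sum_k |x_k|^\gamma$ applied to the jump representation of the Poisson integral to get
\[
\left|\int_s^t\int_E H(u,z)N(du,dz)\right|^\gamma\leq\int_s^t\int_E |H(u,z)|^\gamma N(du,dz);
\]
taking expectation replaces $N$ by $\widehat N$, which is then bounded by $(t-s)\sup_u\E[\int_E|H|^\gamma m(dz)]$. Jensen lowers $\gamma$ to $\eta$, producing only the first term (the indicator $\1_{\gamma\in(1,2]}$ vanishes). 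In the regime $\gamma\in(1,2]$, split $N=\widetilde N+\widehat N$ and apply part (a) to the compensated piece; for the drift piece $\int_s^t\int_E H\,m(dz)du$, use Hölder's inequality in $u$ when $\eta\geq 1$ and subadditivity of $x\mapsto x^\eta$ when $\eta<1$ to obtain a bound of the form $(t-s)^\eta\sup_u\E[|\int_E H\,m(dz)|^\gamma]^{\eta/\gamma}$, where another application of Jensen lifts the inner expectation to power $\gamma$. Since $t-s\leq 1$ and $\gamma\geq 1$, we have $(t-s)^\eta\leq(t-s)^{\eta/\gamma}$, yielding the second term as stated.

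The only bookkeeping obstacle is keeping Jensen straight: it is applied both in the probability space (to drop $\gamma\to\eta$) and in the time variable (to relate $\E[|F|^\eta]$ or $\E[|F|]^\eta$ to $\E[|F|^\gamma]^{\eta/\gamma}$). Once we use $t-s\leq 1$, the extra power of $(t-s)$ always goes in our favour, so the constant $C$ depends only on $\eta$ and $\gamma$. The Bichteler-Jacod inequality is invoked as a black box.
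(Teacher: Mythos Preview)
Your proposal is correct and follows essentially the same route as the paper: for (a) the paper also establishes the $L^\gamma$ bound for the compensated integral and then reduces to $\eta\leq\gamma$ via Jensen, and for (b) it likewise splits into the sub-additive regime $\gamma\in(0,1]$ and the decomposition $N=\widetilde N+\widehat N$ for $\gamma\in(1,2]$. The only cosmetic difference is that where you invoke the Bichteler--Jacod inequality as a black box, the paper derives it inline via the BDG inequality followed by sub-additivity of $x\mapsto x^{\gamma/2}$ on the jump sum $\int|H|^2\,N$.
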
 \begin{proof} (a) If $\eta\geq1$, then by the BDG-inequality,
sub-additivity of $x\longmapsto x^{\frac{\gamma}{2}}$ and Hölder
inequality we obtain
\begin{align*}
 & \ \E\left[\left|\int\limits _{s}^{t}\int\limits _{E}H(u,z)\widetilde{N}(du,dz)\right|^{\eta}\right]\leq C\E\left[\left|\int\limits _{s}^{t}\int\limits _{E}|H(u,z)|^{2}N(du,dz)\right|^{\eta/2}\right]\\
 & \leq C\E\left[\left|\int\limits _{s}^{t}\int\limits _{E}|H(u,z)|^{\gamma}N(du,dz)\right|^{\eta/\gamma}\right]\leq C\E\left[\int\limits _{s}^{t}\int\limits _{E}|H(u,z)|^{\gamma}dum(dz)\right]^{\eta/\gamma}\\
 & \leq C(t-s)^{\frac{\eta}{\gamma}}\sup\limits _{u\in[s,t]}\E\left[\int\limits _{E}|H(u,z)|^{\gamma}m(dz)\right]^{\eta/\gamma}.
\end{align*}
If $0<\eta\leq1\leq\gamma\leq2$, then the Hölder inequality and previous
estimates imply
\begin{align*}
\E\left[\left|\int\limits _{s}^{t}\int\limits _{E}H(u,z)\widetilde{N}(du,dz)\right|^{\eta}\right] & \leq\E\left[\left|\int\limits _{s}^{t}\int\limits _{E}H(u,z)\widetilde{N}(du,dz)\right|\right]^{\eta}\\
 & \leq C(t-s)^{\frac{\eta}{\gamma}}\sup\limits _{u\in[s,t]}\E\left[\int\limits _{E}|H(u,z)|^{\gamma}m(dz)\right]^{\eta/\gamma}.
\end{align*}
(b) If $\gamma\in(0,1]$, then by sub-additivity of $x\longmapsto x^{\gamma}$
and Hölder inequality we get
\begin{align*}
 & \ \E\left[\left|\int\limits _{s}^{t}\int\limits _{E}H(u,z)N(du,dz)\right|^{\eta}\right]\leq\E\left[\left|\int\limits _{s}^{t}\int\limits _{E}|H(u,z)|^{\gamma}N(du,dz)\right|^{\eta/\gamma}\right]\\
 & \leq\E\left[\int\limits _{s}^{t}\int\limits _{E}|H(u,z)|^{\gamma}dum(dz)\right]^{\eta/\gamma}\leq(t-s)^{\frac{\eta}{\gamma}}\sup\limits _{u\in[s,t]}\E\left[\int\limits _{E}|H(u,z)|^{\gamma}m(dz)\right]^{\eta/\gamma}.
\end{align*}
If $\gamma\in(1,2]$, then
\begin{align*}
 & \ \E\left[\left|\int\limits _{s}^{t}\int\limits _{E}H(u,z)N(du,dz)\right|^{\eta}\right]\leq C\E\left[\left|\int\limits _{s}^{t}\int\limits _{E}H(u,z)\widetilde{N}(du,dz)\right|^{\eta}\right]+C\E\left[\left|\int\limits _{s}^{t}\int\limits _{E}H(u,z)dum(dz)\right|^{\eta}\right].
\end{align*}
The stochastic integral can be estimated by part (a), and the second
term by
\begin{align*}
\E\left[\left|\int\limits _{s}^{t}\int\limits _{E}H(u,z)dum(dz)\right|^{\eta}\right] & \leq\E\left[\left|\int\limits _{s}^{t}\int\limits _{E}H(u,z)dum(dz)\right|^{\gamma}\right]^{\eta/\gamma}\\
 & \leq(t-s)^{\eta}\sup\limits _{u\in[s,t]}\E\left[\left(\int\limits _{E}|H(u,z)|m(dz)\right)^{\gamma}\right]^{\eta/\gamma},
\end{align*}
which proves the assertion since $t-s\leq1$ and $\gamma\geq1$. \end{proof}

\begin{footnotesize}

 \bibliographystyle{alpha}
\bibliography{Bibliography}

\end{footnotesize}
\end{document}